\documentclass[letterpaper,10pt,reqno]{amsart}

%\usepackage{showkeys}
%\usepackage{showlabels}
%\makeatletter
%--------------------------------------------------------------------------------
\usepackage{amssymb}
\usepackage{latexsym}
\usepackage{amsbsy}
\usepackage{amsfonts}
\usepackage{comment}

\usepackage{hyperref}
\usepackage{graphicx}
\usepackage{enumerate}
\usepackage{enumitem}
\usepackage{color}

% For figs
\usepackage{stackengine}
\usepackage{multirow}
\usepackage{caption}
\usepackage{adjustbox}

\usepackage{tikz}

\def\marginpar#1{\ignorespaces}

\textheight=600pt \textwidth=440pt \oddsidemargin=10pt \evensidemargin=10pt \topmargin=14pt
\headheight=8pt
\parindent=0pt
\parskip=2pt

\DeclareMathOperator\argmax{argmax}

\DeclareMathOperator\var{Var}
\newtheorem{theorem}{Theorem}[section]
\newtheorem{lemma}[theorem]{Lemma}
\newtheorem{proposition}[theorem]{Proposition}
\newtheorem{corollary}[theorem]{Corollary}
\newtheorem{definition}[theorem]{Definition}

\numberwithin{equation}{section}

\newcommand{\beq}{\begin{equation}}
\newcommand{\eeq}{\end{equation}}

\newcommand{\bal}{\begin{align}}
\newcommand{\eal}{\end{align}}
\newcommand{\bals}{\begin{align*}}
\newcommand{\eals}{\end{align*}}

%%%%%%%%%%%%%% fonts/sets %%%%%%%%%%%%%%%%%%%%%%%

\makeatother
%------------------------------------------------------------------------------------
\begin{document}
\title[Fine-tuning via stochastic control]{Fine-tuning of diffusion models via stochastic control: entropy regularization and beyond}

\author[Wenpin Tang and Fuzhong Zhou]{{Wenpin} Tang and Fuzhong Zhou}
\address{Department of Industrial Engineering and Operations Research, Columbia University. 
} \email{wt2319@columbia.edu, fz2329@columbia.edu}

\date{\today} 
\begin{abstract}
This paper aims to develop and provide a rigorous treatment 
to entropy regularized fine-tuning in the context of continuous-time diffusion models,
which was proposed by Uehara et al. (arXiv:2402.15194, 2024).
The idea is to use stochastic control for sample generation,
where the entropy regularizer is introduced to mitigate reward collapse.
We also show how the analysis can be extended to fine-tuning 
with a general $f$-divergence regularizer.
Numerical experiments on large-scale text-to-image models -- Stable Diffusion v1.5
are conducted to validate our approach.
\end{abstract}

\maketitle

\textit{Key words}: Constrained optimization, diffusion models, entropy-regularization, $f$-divergence, fine-tuning, stochastic control, stochastic differential equations.

\textit{AMS 2020 Mathematics Subject Classification: 60J60, 65C30, 93E20.}

%\setcounter{tocdepth}{1}
%\tableofcontents
%-------------------------------------------------------------------------------------------------
\section{Introduction}

\quad Diffusion models \cite{Ho20, Sohl15, Song20}
are a promising generative approach to produce high-quality samples,
which have been observed to outperform adversarial generative nets 
in image and audio synthesis \cite{Dh21, Kong21},
and underpin recent success in text-to-image creators such as 
DALL$\cdot$E2 \cite{Ramesh22} and Stable Diffusion \cite{Rombach22},
and the text-to-video generator Sora \cite{Sora}.
Though diffusion models can capture intricate and high-dimensional
data distributions, 
they may suffer from sources of bias or fairness concerns \cite{LA23},
and 
the training process (especially for the aforementioned large models)
requires considerable time and effort. 

\quad There is growing interest in improving diffusion models
in terms of generated sample quality, as well as controllability.
One straightforward approach is to
fine-tune the sampler customized for a specific task 
using the pretrained (diffusion) model
as a base model. 
For instance, in image/video generation, 
we aim to fine-tune diffusion models to 
enhance the aesthetic quality
and prevent distorted contents.
With the emergence of human-interactive platforms such as ChatGPT,
there is substantial demand to align
generative models 
with user/human preference or feedback.
Recent works \cite{Black23, Fan23, FW23, ZZT24} 
proposed to fine-tune diffusion models by reinforcement learning (RL),
and \cite{WD23} by direct preference optimization.
In these works,  
the reward functions are learned statistical models,
e.g., an aesthetic reward in image generation is a ranking model 
fit to the true aesthetic preferences of human raters.
Refer to \cite{CMFW24, Ueh24, Win25} for reviews on 
preference tuning of diffusion models.

\quad The above methods allow to fine-tune the diffusion model
to generate samples with high nominal rewards. 
However, they may lead to {\em reward collapse} or {\em hacking} 
\cite{Amo16, SC23},
a phenomenon referring to overfitting the reward 
(that is trained by e.g., a limited number of human ratings.)
In other words, the diffusion model is fine-tuned with respect to 
some human-rated score
which may fail to generalize.
Moreover, exploiting the reward exclusively
also harms {\em diversity}, 
a criterion that is at the heart of generative modeling.

\quad In order to mitigate reward collapse/hacking
and enhance diversity,
\cite{UZ24} proposed to add
an entropy regularizer with respect to the pretrained model
in the loss objective. 
This yields the {\em entropy-regularized fine-tuning},
which is an exponential tilting of that generated by
the pretrained model
and
can be viewed as a ``soft" {\em diffusion guidance}
\cite{Dh21, HS21, TX25}.
A stochastic control approach was developed to
emulate this novel distribution. 
In comparison with the standard control framework where
the initial distribution is fixed,
both the control and the initial distribution are decision variables.
The resulting problem is to ``decouple" these two variables by first solving a standard control problem,
followed by finding the optimal initial distribution.
To the best of our knowledge, 
this is the first time that stochastic control is used for
diffusion sample generation.
In the concurrent work \cite{DC25}, stochastic control is also used for fine-tuning
diffusion models, but without learning the initial distribution
(instead, it requires cautious model selection.)
The idea of adding an entropy regularizer
also appeared in \cite{ZZT24}
for fine-tuning the diffusion model via continuous RL, 
and in \cite{Kong24} for constraint optimization via the diffusion model.
In a different context, (entropy regularized) stochastic control was proposed 
to solve
non-convex problems \cite{GXZ22, TZZ22},
as an alternative to the simulated annealing algorithm.

\quad The purpose of this paper is two-fold.
First, we provide a rigorous treatment to the theory of entropy-regularized fine-tuning 
proposed in \cite{UZ24}. 
The paper is mostly self-contained. 
%Quantitative results are provided. 
Second, we extend to the problem of fine-tuning 
regularized by general $f$-divergence,
and show how the analysis carries over.
The remainder of the paper is organized as follows. 
In Section \ref{sc2},
we recall background of diffusion models.
The entropy-regularized fine-tuning is studied in Section \ref{sc3},
and the extension to fine-tuning regularized by $f$-divergence
is given in Section \ref{sc4}.
We conduct numerical experiments in Section \ref{scnu}.
Concluding remarks are summarized in Section \ref{sc5}.

\medskip
{\em Notations}: Below we collect a few notations that will be used throughout.
\begin{itemize}[itemsep = 3 pt]
\item
$\mathbb{R}_{+}$ denotes the set of nonnegative real numbers.
\item
For $x, y$ vectors, denote by $x \cdot y$ the inner product between $x$ and $y$,
and $|x|$ is the Euclidean ($L^2$) norm of $x$.
\item
For $p(\cdot)$ a probability distribution on $\mathbb{R}^d$, 
we assume that it has a density $p(x)$, and write $p(dx) = p(x) dx$. 
\item
The notation $X \sim p(\cdot)$ means that the random variable is distributed according to $p(\cdot)$.
We write $\mathbb{E}_{p(\cdot)}(X)$ for the expectation of $X \sim p(\cdot)$.
\item
For $p(\cdot)$ and $q(\cdot)$ two probability distributions, 
$D_{TV}(p(\cdot), q(\cdot)):= \sup_A |p(A) - q(A)|$ is the total variation distance between $p(\cdot)$ and $q(\cdot)$,
and 
$D_{KL}(p(\cdot), q(\cdot)):= \int \log \frac{dp}{dq} dp$ is the KullbackLeilber (KL) divergence between
$p(\cdot)$ and $q(\cdot)$.
\end{itemize}

%-------------------------------------------------------------------------------------------------
\section{Diffusion models}
\label{sc2}

\quad In this section, we review diffusion models
that will be used as the pretrained models in fine-tuning. 
We follow closely the presentation in \cite{TZ24tut}.

\quad The goal of diffusion models is to generate new samples (e.g., images, video, text) that 
resemble the target data, while maintain a certain level of diversity. 
Diffusion modeling relies on a forward-backward procedure:
\begin{itemize}[itemsep = 3 pt]
\item
{\em Forward deconstruction}:
start from the target distribution $X_0 \sim p_{\tiny \mbox{data}}(\cdot)$,
the model gradually adds noise to transform the signal into noise
$X_0 \to X_1 \to \cdots \to X_n \sim p_{\tiny \mbox{noise}}(\cdot)$.
\item
{\em Backward reconstruction}: 
start with the noise $X_n \sim p_{\tiny \mbox{noise}}(\cdot)$, and reverse the forward
process to recover the signal from noise $X_n \to X_{n-1} \to \cdots \cdot X_0 \sim p_{\tiny \mbox{data}}(\cdot)$.
\end{itemize}
Here we consider the diffusion model in continuous time,
governed by the stochastic differential equation (SDE):
\begin{equation}
\label{eq:SDE}
dX_t = b(t,X_t) dt + \sigma(t) dW_t, \quad X_0 \sim p_{\tiny \mbox{data}}(\cdot),
\end{equation}
where 
$(W_t, \, t \ge 0)$ is $d$-dimensional Brownian motion,
and
$b: \mathbb{R}_+ \times \mathbb{R}^d \to \mathbb{R}^d$
and $\sigma: \mathbb{R}_+ \to \mathbb{R}_+$ are model parameters. 
Some conditions on $b(\cdot, \cdot)$, $\sigma(\cdot)$
are required so that 
the SDE \eqref{eq:SDE} is well-defined 
(see \cite[Chapter 5]{KS91}, \cite{SV79}).
In the sequel,
we assume for simplicity that
the distribution of $X_t$ has a suitably smooth density,
and denote by $p(t,x):=\mathbb{P}(X_t \in dx)/dx$.
%the target data distribution $p_{\tiny \mbox{data}}(\cdot)$
%has a suitably smooth density,
%and write $p_{\tiny \mbox{data}}(dx) = p_{\tiny \mbox{data}}(x) dx$.

\quad \quad The key to the success of continuous-time diffusion models is that
their time reversal has a rather tractable form, 
making it more convenient for methodological development.
It is known \cite{Ander82, HP86} that
the distribution of the time reversal process $\overline{X}_t = X_{T-t}$ is governed by the SDE:
\begin{equation*}
%\label{eq:reversal}
d\overline{X}_t = (-b(T-t, \overline{X}_t) + \sigma^2(T-t) \nabla \log p(T-t, \overline{X}_t)) dt + \sigma(T-t) dB_t, \quad \overline{X}_0 \sim p(T, \cdot),
\end{equation*}
where $(B_t, \, t \ge 0)$ is a copy of $d$-dimensional Brownian motion.
Thus, the process  recovers 
$\overline{X}_T \sim p_{\tiny \mbox{data}}(\cdot)$ at time $T$.

\quad There are, however, two twists in diffusion modeling.  
First, diffusion models aim to generate the target distribution from {\em noise},
and the noise should not depend
on the target distribution.
So instead of setting $\overline{X}_0 \sim p(T, \cdot)$,
the backward process is initiated with some noise
$p_{\tiny \mbox{noise}}(\cdot)$ as a proxy of 
$p(T, \cdot)$:
\begin{equation}
\label{eq:reversal}
d\overline{X}_t = \left(-b(T-t, \overline{X}_t) + \sigma^2(T-t) \nabla \log p(T-t, \overline{X}_t)\right) dt + \sigma(T-t) dB_t, \quad \overline{X}_0 \sim p_{\tiny \mbox{noise}}(\cdot).
\end{equation}
The diffusion model is specified by the pair $(b(\cdot, \cdot), \sigma(\cdot))$,
and existing examples include 
Ornstein-Ulenback processes \cite{DV21},
variance exploding (VE) SDEs, 
variance preserving (VP) SDEs \cite{Song20},
and contractive diffusion models \cite{TZ24}.
The choice of $p_{\tiny \mbox{noise}}(\cdot)$
depends on each specific model,
and varies case by case. 

\quad Second, 
all but the term $\nabla \log p(T-t, \overline{X}_t)$ 
in \eqref{eq:reversal} are available.
So in order to implement the SDE \eqref{eq:reversal}, 
we need to compute $\nabla \log p(t,x)$,
known as {\em Stein's score function}.
Recently developed {\em score matching} techniques 
allow to estimate the score function
via function approximations. 
More precisely, 
we use a family of functions $\{s_\theta(t,x)\}_\theta$ (e.g., neural nets)
to approximate the score $\nabla \log p (t,x)$. 
The most widely used approach is {\em denoising score matching} \cite{Vi11}:
\begin{equation*}
\min_\theta \mathbb{E}_{t \sim \tiny \mbox{Unif}\, [0,T], \, X_0\sim p_{data}(\cdot)} \bigg[
\mathbb{E}_{p(t, \cdot | X_0)} \Big|s_{\theta}(t,X_t)- \nabla \log p(t,X_t | X_0)\Big|^2 \bigg].
\end{equation*}
See \cite[Section 4]{TZ24tut} for a review.

\quad Now with the (true) score function $\nabla \log p (t,x)$
being replaced with the score matching function $s_\theta(t,x)$, 
the backward process is set to:
\begin{equation}
\label{eq:diffY}
dY_t = \underbrace{\left(-b(T-t, Y_t) + \sigma^2(T-t) s_\theta(T-t, Y_t)\right)}_{\overline{b}(t, Y_t)}  dt + \sigma(T-t) dB_t, \quad Y_0 \sim p_{\tiny \mbox{noise}}(\cdot).
\end{equation}
Here we use the symbol $Y$ to represent the process using the score matching $s_\theta(\cdot, \cdot)$,
which is distinguished from the process $\overline{X}$ 
(with the true score $\nabla \log p (t,x)$) defined by \eqref{eq:reversal}.
The SDE \eqref{eq:diffY} provides the generic form of the diffusion sampling,
which serves as a pretrained model for further goal-directed fine-tuning. 
Denote by $Q_{[0,T]}(\cdot)$ the probability distribution (on the path space) 
of the process $(Y_t, \, 0 \le t \le T)$,
and $Q_t(\cdot)$ its marginal distribution at time $t$. 
Write 
\begin{equation}
\label{eq:pretrain}
Y_T \sim  Q_T(\cdot)=:p_{\tiny \mbox{pre}}(\cdot).
\end{equation}

%-------------------------------------------------------------------------------------------------
\section{Entropy-regularized fine-tuning and stochastic control}
\label{sc3}

\quad In this section, we consider the problem of entropy-regularized fine-tuning 
that aims to mitigate the reward collapse in the context of diffusion models.
We provide a rigorous treatment for the closely related
stochastic control problem, following the idea of \cite{UZ24}.
As we will see,
the arguments used in this section can be generalized
to study the problem 
of fine-tuning with an $f$-divergence regularizer.

%------------------------------------------------------
\subsection{Entropy-regularized fine-tuning}
\label{sc31}

Let's explain entropy-regularized fine-tuning using a pretrained diffusion model \eqref{eq:diffY}.
The idea of fine-tuning is to calibrate a (possibly powerful) pretained model 
to some downstream task with a specific goal,
e.g. the aesthetic quality of generated images.
Such a goal is captured by a reward function 
$r: \mathbb{R}^d \to \mathbb{R}_+$, 
and an obvious way is to
maximize $\mathbb{E}_{\widetilde{Q}_{[0,T]}(\cdot)}[r(Y_T)]$
over a class of fine-tuned diffusion models $\widetilde{Q}_{[0,T]}(\cdot)$.
As mentioned in the introduction, 
this approach may generate samples
that overfit the reward,
the phenomenon known as 
the reward collapse/hacking.
In order to avoid the reward collapse
and utilize the pretrained model,
it is natural to consider the optimization problem:
\begin{equation}
\label{eq:entFT}
p_{\tiny \mbox{ftune}}(\cdot):=
\argmax_p \mathbb{E}_{p(\cdot)}[r(Y)] - \alpha D_{KL}(p(\cdot), p_{\tiny \mbox{pre}}(\cdot)),
\end{equation}
where the maximization is over all probability distribution on $\mathbb{R}^d$,
and 
the hyperparameter $\alpha > 0$ controls the level of exploration
relative to the pretrained model. 
The problem \eqref{eq:entFT} is referred to as entropy-regularized fine-tuning
of the pretrained model \eqref{eq:diffY}.

\quad It is easily seen that the problem \eqref{eq:entFT} has a closed-form solution.
\begin{lemma}
\label{lem:DVvar}
Assume that $\int \exp\left(\frac{r(y)}{\alpha} \right) p_{\tiny \mbox{pre}}(y) dy < \infty$.
We have:
\begin{equation}
\label{eq:cfsol}
p_{\tiny \mbox{ftune}}(y) = \frac{1}{C} \exp\left(\frac{r(y)}{\alpha} \right) p_{\tiny \mbox{pre}}(y),
\end{equation}
where $C:=\int \exp\left(\frac{r(y)}{\alpha} \right) p_{\tiny \mbox{pre}}(y) dy$ is the normalizing constant.
\end{lemma}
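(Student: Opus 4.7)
The plan is to invoke the Gibbs variational principle: rewrite the objective by completing the ``KL square'' around the candidate solution so that the optimization reduces to minimizing a KL divergence from the decision variable to the candidate, which is automatically solved by the candidate itself.

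\textbf{Preliminary reduction.} First I would note that the integrability assumption $C := \int \exp(r(y)/\alpha) p_{\tiny \mbox{pre}}(y) dy < \infty$ ensures that the proposed $p_{\tiny \mbox{ftune}}$ in \eqref{eq:cfsol} is a bona fide probability density. Since $D_{KL}(p(\cdot), p_{\tiny \mbox{pre}}(\cdot)) = +\infty$ whenever $p \not\ll p_{\tiny \mbox{pre}}$, it suffices to restrict the maximization to $p \ll p_{\tiny \mbox{pre}}$ (with density, by abuse of notation, also denoted $p$); any such $p$ is automatically absolutely continuous with respect to $p_{\tiny \mbox{ftune}}$ as well, since $p_{\tiny \mbox{ftune}}$ has full support relative to $p_{\tiny \mbox{pre}}$.

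\textbf{Main computation.} From the definition \eqref{eq:cfsol} one has the identity
\[
r(y) \;=\; \alpha \log C \;+\; \alpha \log \frac{p_{\tiny \mbox{ftune}}(y)}{p_{\tiny \mbox{pre}}(y)}.
\]
Substituting this into the objective and combining with the KL term yields
\[
\mathbb{E}_{p(\cdot)}[r(Y)] - \alpha D_{KL}(p(\cdot), p_{\tiny \mbox{pre}}(\cdot))
= \alpha \log C + \alpha \int \log \frac{p_{\tiny \mbox{ftune}}}{p_{\tiny \mbox{pre}}} \, p(y)\, dy - \alpha \int \log \frac{p}{p_{\tiny \mbox{pre}}} \, p(y)\, dy.
\]
The two logarithms collapse via $\log(p_{\tiny \mbox{ftune}}/p_{\tiny \mbox{pre}}) - \log(p/p_{\tiny \mbox{pre}}) = -\log(p/p_{\tiny \mbox{ftune}})$, so the objective equals $\alpha \log C - \alpha D_{KL}(p(\cdot), p_{\tiny \mbox{ftune}}(\cdot))$.

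\textbf{Conclusion.} By Gibbs' inequality $D_{KL}(p(\cdot), p_{\tiny \mbox{ftune}}(\cdot)) \ge 0$, with equality if and only if $p(\cdot) = p_{\tiny \mbox{ftune}}(\cdot)$. Hence the maximum value of the objective is $\alpha \log C$, attained uniquely at $p_{\tiny \mbox{ftune}}$, establishing \eqref{eq:cfsol}.

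\textbf{Main obstacle.} The computation is essentially algebraic once one guesses the exponential tilting form, so there is no substantive obstacle. The only place demanding a little care is the measure-theoretic bookkeeping: one must restrict attention to $p \ll p_{\tiny \mbox{pre}}$ to make the KL term finite, and verify that all integrals (in particular $\int \log(p_{\tiny \mbox{ftune}}/p_{\tiny \mbox{pre}}) \, dp = \alpha^{-1} \mathbb{E}_p[r(Y)] - \log C$) are well-defined for such $p$; this follows from $r \ge 0$ and the integrability hypothesis.
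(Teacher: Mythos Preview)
Your argument is correct and is precisely the standard derivation of the Donsker--Varadhan (Gibbs) variational principle. The paper's own proof consists of a one-line citation to that principle, so your approach is the same in spirit---you have simply supplied the elementary computation the paper delegates to the reference.
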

\begin{proof}
This follows from the duality formula for the variational problem in the theory of large deviations
\cite{DV83}.
\end{proof}

\quad At a high level, 
the target data with distribution $p_{\tiny \mbox{data}}(\cdot)$ is used to build 
the pretrained model that generates $p_{\tiny \mbox{pre}}(\cdot) \approx p_{\tiny \mbox{data}}(\cdot)$,
and the pretrained model is fine-tuned via some reward function to produce $p_{\tiny \mbox{ftune}}(\cdot)$.
Note that this differs from the setting of rejection sampling 
in such a way that we can only sample $p_{\tiny \mbox{pre}}(\cdot)$ 
and evaluate (usually in blackbox) via $r(\cdot)$ 
without knowing their exact forms. 

\quad As we will see in the next subsection, 
the main idea is to apply a (stochastic) control to 
the diffusion sampler \eqref{eq:diffY}
in order to generate $p_{\tiny \mbox{ftune}}(\cdot)$ in just one pass.
The next result shows how the fine-tuned distribution \eqref{eq:cfsol}
differs from $p_{\tiny \mbox{data}}(\cdot)$.
\begin{proposition}
\label{prop:TVd}
We have:
\begin{equation}
\label{eq:TVd}
D_{TV}(p_{\tiny \mbox{ftune}}(\cdot), p_{\tiny \mbox{data}}(\cdot))
\le D_{TV}(p_{\tiny \mbox{pre}}(\cdot), p_{\tiny \mbox{data}}(\cdot)) + \frac{1}{2} \sqrt{\mathbb{E}_{p_{\tiny \mbox{pre}}(\cdot)} \left[e^{\frac{2 r(Y)}{\alpha}} \right]}.
\end{equation}
\end{proposition}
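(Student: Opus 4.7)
The plan is to apply the triangle inequality for total variation and then estimate the single quantity $D_{TV}(p_{\tiny \mbox{ftune}}(\cdot), p_{\tiny \mbox{pre}}(\cdot))$ using the explicit closed-form \eqref{eq:cfsol} from Lemma \ref{lem:DVvar}. Since total variation is a metric, the first step gives
\begin{equation*}
D_{TV}(p_{\tiny \mbox{ftune}}(\cdot), p_{\tiny \mbox{data}}(\cdot)) \le D_{TV}(p_{\tiny \mbox{ftune}}(\cdot), p_{\tiny \mbox{pre}}(\cdot)) + D_{TV}(p_{\tiny \mbox{pre}}(\cdot), p_{\tiny \mbox{data}}(\cdot)),
\end{equation*}
and it remains to bound the first term on the right by $\tfrac{1}{2}\sqrt{\mathbb{E}_{p_{\tiny \mbox{pre}}(\cdot)}[e^{2r(Y)/\alpha}]}$.

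For that bound, I would use the $L^1$ representation of $D_{TV}$ together with Lemma \ref{lem:DVvar} to write
\begin{equation*}
D_{TV}(p_{\tiny \mbox{ftune}}(\cdot), p_{\tiny \mbox{pre}}(\cdot)) = \tfrac{1}{2}\int \left| \tfrac{1}{C} e^{r(y)/\alpha} - 1 \right| p_{\tiny \mbox{pre}}(y)\,dy = \tfrac{1}{2}\,\mathbb{E}_{p_{\tiny \mbox{pre}}(\cdot)}\!\left| \tfrac{1}{C} e^{r(Y)/\alpha} - 1 \right|,
\end{equation*}
and then upgrade this $L^1$ norm to an $L^2$ norm via the Cauchy--Schwarz inequality. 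Since $\mathbb{E}_{p_{\tiny \mbox{pre}}(\cdot)}[e^{r(Y)/\alpha}/C] = 1$, the square inside the expectation is precisely a variance, so the computation collapses to $\mathbb{E}_{p_{\tiny \mbox{pre}}(\cdot)}[e^{2r(Y)/\alpha}]/C^2 - 1$.

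The only remaining issue is the presence of the awkward constant $1/C^2$; the bound claimed in the statement is cleaner, having no $C$ at all. Here I would use that $r(\cdot)\ge 0$ by hypothesis, which gives $C=\mathbb{E}_{p_{\tiny \mbox{pre}}(\cdot)}[e^{r(Y)/\alpha}] \ge 1$ (directly, or via Jensen). Hence $1/C^2 \le 1$, which together with dropping the $-1$ yields
\begin{equation*}
D_{TV}(p_{\tiny \mbox{ftune}}(\cdot), p_{\tiny \mbox{pre}}(\cdot)) \le \tfrac{1}{2}\sqrt{\tfrac{1}{C^2}\mathbb{E}_{p_{\tiny \mbox{pre}}(\cdot)}[e^{2r(Y)/\alpha}]-1} \le \tfrac{1}{2}\sqrt{\mathbb{E}_{p_{\tiny \mbox{pre}}(\cdot)}[e^{2r(Y)/\alpha}]}.
\end{equation*}
Combining with the triangle inequality gives \eqref{eq:TVd}. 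None of the steps are delicate; the only conceptual point to keep track of is that nonnegativity of $r$ is what lets us eliminate the normalizing constant $C$ from the final bound, and implicitly the finiteness of $\mathbb{E}_{p_{\tiny \mbox{pre}}(\cdot)}[e^{2r(Y)/\alpha}]$ is what makes the right-hand side nontrivial.
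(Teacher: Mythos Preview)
Your proof is correct but takes a different and arguably more elementary route than the paper. The paper first bounds the KL divergence $D_{KL}(p_{\tiny \mbox{ftune}}(\cdot), p_{\tiny \mbox{pre}}(\cdot))$: it computes this quantity explicitly from \eqref{eq:cfsol}, then applies Jensen's inequality to the function $x \mapsto \log x + \tfrac{1}{2}x^{2}$ (convex on $[1,\infty)$, which is where $r\ge 0$ enters) to obtain $D_{KL}\le \tfrac{1}{2}\var_{p_{\tiny \mbox{pre}}}(e^{r(Y)/\alpha}) \le \tfrac{1}{2}\mathbb{E}_{p_{\tiny \mbox{pre}}}[e^{2r(Y)/\alpha}]$, and finishes with Pinsker's inequality. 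You bypass KL entirely: the $L^{1}$ formula for $D_{TV}$ plus Cauchy--Schwarz lands you directly on $\tfrac{1}{2}\sqrt{\var_{p_{\tiny \mbox{pre}}}(e^{r(Y)/\alpha}/C)}$, and then $r\ge 0\Rightarrow C\ge 1$ removes the normalizing constant. Both arguments pass through the same variance bound; yours is shorter and avoids Pinsker, while the paper's route yields a KL bound as a byproduct, which is what gets reused later in the $f$-divergence corollary.
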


\quad Before proving the proposition, let's make some remarks on the two terms
on the right side of \eqref{eq:TVd}:
\begin{itemize}[itemsep = 3 pt]
\item
The term $D_{TV}(p_{\tiny \mbox{pre}}(\cdot), p_{\tiny \mbox{data}}(\cdot))$ quantifies 
the performance of the pretrained model, 
i.e. how well (or how bad) the diffusion model recovers the target data distribution $p_{\tiny \mbox{data}}(\cdot)$.
Assuming that $\sigma(\cdot)$ is bounded away from zero,
and under the blackbox assumption that 
$\mathbb{E}_{p(t, \cdot)}|\nabla \log p(t, X) - s_\theta(t, X)|^2 \le \varepsilon^2$ for all $t$,
\cite{Chen23, TZ24tut} shows that
\begin{equation}
D_{TV}(p_{\tiny \mbox{pre}}(\cdot), p_{\tiny \mbox{data}}(\cdot))
\le D_{TV}(p_{\tiny \mbox{noise}}(\cdot), p(T, \cdot)) + \varepsilon \sqrt{T}/2.
\end{equation}
The term $D_{TV}(p_{\tiny \mbox{noise}}(\cdot), p(T, \cdot))$ can further be quantified 
under each specific model. 
As an example, for VP (with $b(t,x) = -\frac{1}{2} \beta(t) x$ and $\sigma(t) = \sqrt{\beta(t)}$),
we have $D_{TV}(p_{\tiny \mbox{noise}}(\cdot), p(T, \cdot)) \le e^{-\frac{1}{2} \int_0^T \beta(t) dt} \sqrt{\mathbb{E}_{p_{\tiny \mbox{data}}(\cdot)}|X|^2/2}$.
So the bound \eqref{eq:TVd} specializes to:
\begin{equation}
\label{eq:VP1}
D_{TV}(p_{\tiny \mbox{ftune}}(\cdot), p_{\tiny \mbox{data}}(\cdot)) \le
e^{-\frac{1}{2} \int_0^T \beta(t) dt} \sqrt{\mathbb{E}_{p_{\tiny \mbox{data}}(\cdot)}|X|^2/2}
+ \frac{\varepsilon}{2} \sqrt{T} + \frac{1}{2} \sqrt{\mathbb{E}_{p_{\tiny \mbox{pre}}(\cdot)} \left[e^{\frac{2 r(Y)}{\alpha}} \right]}.
\end{equation}
\item
The second term on the right side of \eqref{eq:TVd} quantifies the difference
between the distribution $p_{\tiny \mbox{pre}}(\cdot)$ of samples 
generated from the pretrained model 
and the fine-tuned distribution $p_{\tiny \mbox{ftune}}(\cdot)$.
If the reward $r(x)$ is bounded, say by $\overline{r}>0$, then the term is bounded from above by
$\frac{1}{2} \exp\left(\frac{\overline{r}}{\alpha} \right)$.
In this case, the bound \eqref{eq:VP1} for VP becomes:
\begin{equation}
D_{TV}(p_{\tiny \mbox{ftune}}(\cdot), p_{\tiny \mbox{data}}(\cdot)) \le
e^{-\frac{1}{2} \int_0^T \beta(t) dt} \sqrt{\mathbb{E}_{p_{\tiny \mbox{data}}(\cdot)}|X|^2/2}
+ \frac{\varepsilon}{2} \sqrt{T} + \frac{1}{2} e^{\frac{\overline{r}}{\alpha}}.
\end{equation}
When $\alpha \to 0$, the bound increases to $+\infty$,
which gives an explanation of the reward collapse.
\end{itemize}

\begin{proof}[Proof of Proposition \ref{prop:TVd}]
We have:
\begin{equation}
\label{eq:KLpreft}
D_{KL}(p_{\tiny \mbox{ftune}}(\cdot), p_{\tiny \mbox{pre}}(\cdot))
= - \mathbb{E}_{p_{\tiny \mbox{pre}}(\cdot)}\left[ \frac{r(Y)}{\alpha} \right]
+ \log \mathbb{E}_{p_{\tiny \mbox{pre}}(\cdot)}[e^{r(Y)/\alpha}].
\end{equation}
Note that $x \to \log x + \frac{x^2}{2}$ is convex on $[1, +\infty)$.
By Jensen's inequality,
\begin{equation}
\label{eq:Jensen}
\log \mathbb{E}_{p_{\tiny \mbox{pre}}(\cdot)}[e^{r(Y)/\alpha}] + \frac{1}{2} \left(\mathbb{E}_{{p_{\tiny \mbox{pre}}(\cdot)}} [e^{r(Y)/\alpha}]\right)^2
\le \mathbb{E}_{p_{\tiny \mbox{pre}}(\cdot)} \left( \frac{r(Y)}{\alpha} + \frac{1}{2} e^{2 r(Y)/\alpha}\right).
\end{equation}
Combining \eqref{eq:KLpreft} and \eqref{eq:Jensen} yields
\begin{equation*}
D_{KL}(p_{\tiny \mbox{ftune}}(\cdot), p_{\tiny \mbox{pre}}(\cdot))
\le \frac{1}{2} \var_{p_{\tiny \mbox{pre}}(\cdot)}(e^{r(Y)/\alpha})
\le \frac{1}{2} \mathbb{E}_{p_{\tiny \mbox{pre}}}(e^{2 r(Y)/\alpha}).
\end{equation*}
Further by Pinsker's inequality (see \cite[Theorem 7.10]{PW23}), 
we get 
$D_{TV}(p_{\tiny \mbox{ftune}}(\cdot), p_{\tiny \mbox{pre}}(\cdot)) \le \frac{1}{2} \sqrt{\mathbb{E}_{p_{\tiny \mbox{pre}}(\cdot)} \left[e^{\frac{2 r(Y)}{\alpha}} \right]}$.
The bound \eqref{eq:TVd} follows by the triangle inequality.
\end{proof}

%------------------------------------------------------
\subsection{Stochastic control}
\label{sc32}

Now the goal is to emulate the fine-tuned distribution $p_{\tiny \mbox{ftune}}(\cdot)$ specified by \eqref{eq:cfsol}.
We follow the idea of \cite{UZ24} to lift the problem to the process level
via stochastic control:
\begin{equation}
\label{eq:diffYct}
dY_t = (\overline{b}(t, Y_t) + u_t) dt + \overline{\sigma}(t) dB_t, \quad
Y_0 \sim \nu(\cdot),
\end{equation}
where $\overline{b}(\cdot, \cdot)$ is defined in \eqref{eq:diffY},
$\overline{\sigma}(t):= \sigma(T-t)$,
and $(u_t, \, t \ge 0)$ is adapted. 
Here both $u$ and $\nu(\cdot)$ 
are decision variables,
and 
$\nu(\cdot)$ is not (necessarily) the noise distribution $p_{\tiny \mbox{noise}}(\cdot)$ as in \eqref{eq:diffY}.

\quad Denote by $Q^{u, \nu}_{[0,T]}(\cdot)$ the probability distribution of the process $(Y_t, \, 0 \le t \le T)$ defined by \eqref{eq:diffYct},
and $Q^{u, \nu}_t(\cdot)$ its marginal distribution at time $t$.
Recall that $Q_{[0,T]}(\cdot)$ is the distribution of the pretrained model \eqref{eq:diffY},
so
\begin{equation*}
Q_{[0,T]}(\cdot) = Q^{0, p_{\tiny \mbox{noise}}}_{[0,T]}(\cdot).
\end{equation*}
It is natural to consider the following objective for the stochastic control problem:
\begin{equation}
\label{eq:entproc}
(u^*, \nu^*):= \argmax_{(u, \nu) \in \mathcal{A}} \mathbb{E}_{Q^{u, \nu}_{[0,T]}(\cdot)}[r(Y_T)] - \alpha D_{KL}(Q^{u, \nu}_{[0,T]}(\cdot),Q_{[0,T]}(\cdot)),
\end{equation}
where $\mathcal{A}$ is an admissible set for the decision variables that we will specify. 
As we will see, 
$Y_T \sim p_{\tiny \mbox{ftune}}(\cdot)$ under $Q^{u^*, \nu^*}_{[0,T]}(\cdot)$
(which may not hold for the regularization by general $f$-divergence.)

\quad The following result elucidates the form of the objective \eqref{eq:entproc}.
\begin{proposition}
\label{prop:equiv}
Let 
\begin{equation}
\begin{aligned}
\mathcal{A} = \bigg\{(u, \nu):  \nu \mbox{ has a smooth density}, \,\,
\exp &\left(-\int_0^t  \frac{u_s}{\overline{\sigma}(s)} dB_s - \frac{1}{2} \int_0^t \left|\frac{u_s}{\overline{\sigma}(s)}\right|^2 ds \right) \\
& \mbox{and } \int_0^t  \frac{u_s}{\overline{\sigma}(s)} dB_s  \mbox{ are } Q^{u, \nu}_{[0,T]}\mbox{-martingales} \bigg\}.
\end{aligned}
\end{equation}
Then for each $(u, \nu) \in \mathcal{A}$, 
\begin{equation}
\label{eq:objid}
\begin{aligned}
\mathbb{E}_{Q^{u, \nu}_{[0,T]}(\cdot)}[r(Y_T)] &- \alpha D_{KL}(Q^{u, \nu}_{[0,T]}(\cdot),Q_{[0,T]}(\cdot)) \\
&= \mathbb{E}_{Q^{u, \nu}_{[0,T]}(\cdot)}\left[r(Y_T) - \frac{\alpha}{2} \int_0^T \left|\frac{u_t}{\overline{\sigma}(t)}\right|^2 dt -
\alpha \log \left( \frac{\nu(Y_0)}{p_{\tiny \mbox{noise}}(Y_0)}\right)  \right].
\end{aligned}
\end{equation}
\end{proposition}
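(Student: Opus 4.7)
The plan is to reduce the identity \eqref{eq:objid} to a direct application of Girsanov's theorem combined with the chain rule for relative entropy. Since $\mathbb{E}_{Q^{u,\nu}_{[0,T]}}[r(Y_T)]$ appears on both sides, the real task is to prove
\[
D_{KL}(Q^{u,\nu}_{[0,T]},Q_{[0,T]}) = \mathbb{E}_{Q^{u,\nu}_{[0,T]}}\!\left[\frac{1}{2}\int_0^T\left|\frac{u_t}{\sigma(t)}\right|^2 dt + \log\frac{\nu(Y_0)}{p_{\tiny \mbox{pre}}(Y_0)}\right],
\]
with the understanding that $Y_0$ has initial law $p_{\tiny \mbox{noise}}(\cdot)$ under $Q_{[0,T]}$ and law $\nu(\cdot)$ under $Q^{u,\nu}_{[0,T]}$.

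First, I would disintegrate both path measures along $Y_0$. Writing $Q_{[0,T]}(dy_\cdot) = p_{\tiny \mbox{noise}}(dy_0)\,Q_{[0,T]}(dy_\cdot\mid y_0)$ and $Q^{u,\nu}_{[0,T]}(dy_\cdot) = \nu(dy_0)\,Q^{u,\nu}_{[0,T]}(dy_\cdot\mid y_0)$, the chain rule for KL divergence yields
\[
D_{KL}(Q^{u,\nu}_{[0,T]},Q_{[0,T]}) = D_{KL}(\nu,p_{\tiny \mbox{noise}}) + \mathbb{E}_{Y_0\sim\nu}\!\left[D_{KL}\bigl(Q^{u,\nu}_{[0,T]}(\cdot\mid Y_0),\,Q_{[0,T]}(\cdot\mid Y_0)\bigr)\right].
\]
The first term is exactly $\mathbb{E}_{Q^{u,\nu}_{[0,T]}}[\log(\nu(Y_0)/p_{\tiny \mbox{noise}}(Y_0))]$, which contributes the initial-distribution term in the claimed identity.

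Second, for the conditional path measures starting from a common $y_0$, the two SDEs share the same diffusion coefficient and differ only in drift by $u_t$. The first martingale condition in the definition of $\mathcal{A}$ is precisely what is needed to ensure that the Dol\'eans--Dade exponential defines a genuine martingale, so by Girsanov's theorem the Radon--Nikodym derivative is
\[
\log\frac{dQ^{u,\nu}_{[0,T]}(\cdot\mid y_0)}{dQ_{[0,T]}(\cdot\mid y_0)} = \int_0^T\frac{u_t}{\sigma(t)}\,d\widetilde W_t - \frac{1}{2}\int_0^T\left|\frac{u_t}{\sigma(t)}\right|^2 dt,
\]
where $\widetilde W$ denotes the driving Brownian motion under $Q_{[0,T]}$. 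Re-expressing $d\widetilde W_t = dB_t + (u_t/\sigma(t))\,dt$ in terms of the $Q^{u,\nu}_{[0,T]}$-Brownian motion $B$ converts the right-hand side into $\int_0^T (u_t/\sigma(t))\,dB_t + \tfrac{1}{2}\int_0^T |u_t/\sigma(t)|^2\,dt$. Taking expectation under $Q^{u,\nu}_{[0,T]}$ and invoking the second martingality condition in $\mathcal{A}$ to kill the It\^o integral leaves exactly $\tfrac{1}{2}\mathbb{E}_{Q^{u,\nu}_{[0,T]}}\!\int_0^T|u_t/\sigma(t)|^2 dt$. Combining this with the initial-distribution term and multiplying by $-\alpha$ then subtracting from $\mathbb{E}_{Q^{u,\nu}_{[0,T]}}[r(Y_T)]$ produces \eqref{eq:objid}.

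The main delicate point is the bookkeeping of the measure change: one must be careful about which Brownian motion each stochastic integral is driven by, or signs in the Girsanov exponent can easily be misplaced. All integrability is already packaged into $\mathcal{A}$: the first martingality condition legitimizes the change of measure, while the second guarantees that the residual It\^o integral has vanishing expectation under $Q^{u,\nu}_{[0,T]}$. Once these are invoked in the correct order, the remaining manipulations are essentially algebraic.
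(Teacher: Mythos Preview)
Your argument is correct and essentially the same as the paper's: both proofs reduce to computing the path-space Radon--Nikodym derivative via Girsanov (legitimized by the first martingale condition in $\mathcal{A}$) and then killing the residual It\^o integral under $Q^{u,\nu}_{[0,T]}$ using the second martingale condition. The only cosmetic difference is that you first split off the initial-condition contribution via the chain rule for KL and then apply Girsanov to the conditional path laws, whereas the paper writes the full Radon--Nikodym derivative $\frac{dQ^{u,\nu}_{[0,T]}}{dQ_{[0,T]}}$ (including the factor $\nu(Y_0)/p_{\tiny \mbox{noise}}(Y_0)$) in one line; note also the typo $p_{\tiny \mbox{pre}}(Y_0)$ in your displayed KL identity, which should read $p_{\tiny \mbox{noise}}(Y_0)$.
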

\begin{proof}
For $(u, v) \in \mathcal{A}$, 
we have $\exp \left(-\int_0^t  \frac{u_s}{\overline{\sigma}(s)} dB_s - \frac{1}{2} \int_0^t \left|\frac{u_s}{\overline{\sigma}(s)}\right|^2 ds \right)$ is a $Q^{u, \nu}_{[0,T]}$-martingale.
So by Girsanov's theorem (\cite[Chapter 3, Theorem 5.1]{KS91}), 
\begin{equation*}
\frac{dQ^{u, \nu}_{[0,T]}}{dQ_{[0,T]}}(Y) =  \frac{\nu(Y_0)}{p_{\tiny \mbox{noise}}(Y_0)} \exp \left(\int_0^T  \frac{u_t}{\overline{\sigma}(t)} dB_t + \frac{1}{2} \int_0^T \left|\frac{u_t}{\overline{\sigma}(t)}\right|^2 dt \right).
\end{equation*}
As a result,
\begin{equation*}
\begin{aligned}
D_{KL}(Q^{u, \nu}_{[0,T]}(\cdot), Q_{[0,T]}(\cdot))
& = \mathbb{E}_{Q^{u, \nu}_{[0,T]}(\cdot)}\left[ \log \frac{dQ^{u, \nu}_{[0,T]}}{dQ_{[0,T]}}\right] \\
& =  \mathbb{E}_{Q^{u, \nu}_{[0,T]}(\cdot)}\left[ \log \left( \frac{\nu(Y_0)}{p_{\tiny \mbox{noise}}(Y_0)}\right) + \int_0^T  \frac{u_t}{\overline{\sigma}(t)} dB_t + \frac{1}{2} \int_0^T \left|\frac{u_t}{\overline{\sigma}(t)}\right|^2 dt \right] \\
& =  \mathbb{E}_{Q^{u, \nu}_{[0,T]}(\cdot)}\left[ \log \left( \frac{\nu(Y_0)}{p_{\tiny \mbox{noise}}(Y_0)}\right)
+ \frac{1}{2} \int_0^T \left|\frac{u_t}{\overline{\sigma}(t)}\right|^2 dt \right],
\end{aligned}
\end{equation*}
where the last equation is due to the fact that $\int_0^t  \frac{u_s}{\overline{\sigma}(s)} dB_s$ is a $Q^{u, \nu}_{[0,T]}$-martingale.
This yields the identity \eqref{eq:objid}. 
\end{proof}

\quad The condition that $\exp \left(-\int_0^t  \frac{u_s}{\overline{\sigma}(s)} dB_s - \frac{1}{2} \int_0^t \left|\frac{u_s}{\overline{\sigma}(s)}\right|^2 ds \right)$ and 
$\int_0^t  \frac{u_s}{\overline{\sigma}(s)} dB_s$ are $Q^{u, \nu}_{[0,T]}$-martingales
allows to simplify the expression of $D_{KL}(Q^{u, \nu}_{[0,T]}(\cdot),Q_{[0,T]}(\cdot))$,
bypassing the local martingale trap.
Sufficient conditions are known for these local martingales to be (true) martingales.
For instance, 
Novikov's condition $\mathbb{E}_{Q^{u, \nu}_{[0,T]}(\cdot)}\left[\exp \left(\frac{1}{2} \int_0^T \left|\frac{u_t}{\overline{\sigma}(t)}\right|^2 dt  \right)\right] < \infty$
ensures the use of Girsanov's theorem,
and also the $Q^{u, \nu}_{[0,T]}$-martingale property of $\int_0^t  \frac{u_s}{\overline{\sigma}(s)} dB_s$.

\quad By Proposition \ref{prop:equiv}, 
the stochastic control problem \eqref{eq:entproc} is rewritten as:
\begin{equation}
\label{eq:entproc2}
\begin{aligned}
(u^*, \nu^*):&= \argmax_{(u, \nu) \in \mathcal{A}} \mathbb{E}_{Q^{u, \nu}_{[0,T]}(\cdot)}\left[r(Y_T) - \frac{\alpha}{2} \int_0^T \left|\frac{u_t}{\overline{\sigma}(t)}\right|^2 dt -
\alpha \log \left( \frac{\nu(Y_0)}{p_{\tiny \mbox{noise}}(Y_0)}\right)  \right] \\
& = \argmax_{(u, \nu) \in \mathcal{A}}
\int \mathbb{E}_{Q^{u, y}_{[0,T]}(\cdot)}\left[r(Y_T) - \frac{\alpha}{2} \int_0^T \left|\frac{u_t}{\overline{\sigma}(t)}\right|^2 dt \right] \nu(y) dy - \alpha D_{KL}(\nu(\cdot), p_{\tiny \mbox{noise}}(\cdot)),
\end{aligned}
\end{equation}
where $Q^{u, y}_{[0,T]}(\cdot)$ denotes the probability distribution of the process $(Y_t, \, 0 \le t \le T)$ in \eqref{eq:diffYct} conditioned on $Y_0 = y$.
It is easy to see that the two decision variables
 $(u, \nu)$ are separable in the problem \eqref{eq:entproc2}.
It boils down to the following two steps:
\begin{enumerate}[itemsep = 3 pt]
\item
Solve the (standard) stochastic control problem:
\begin{equation}
\label{eq:optu}
v^*(0,y):= \max_{u \in \mathcal{A}} \mathbb{E}_{Q^{u, y}_{[0,T]}(\cdot)}\left[r(Y_T) - \frac{\alpha}{2} \int_0^T \left|\frac{u_t}{\overline{\sigma}(t)}\right|^2 dt \right],
\end{equation}
to get the optimal control $u^*(\cdot, \cdot)$.
\item
Given $v^*(0,y)$, solve for the optimal initial distribution:
\begin{equation}
\label{eq:optnu}
\nu^*(\cdot):= \argmax_\nu \mathbb{E}_{\nu(\cdot)}[v^*(0, Y)] - \alpha D_{KL}(\nu(\cdot), p_{\tiny \mbox{noise}}(\cdot)).
\end{equation}
\end{enumerate}

%------------------------------------------------------
\subsection{Solve the stochastic control problem}
\label{sc33}

Here we study the stochastic control problem \eqref{eq:entproc} (or equivalently \eqref{eq:entproc2}),
following \eqref{eq:optu}--\eqref{eq:optnu}.
We start with the problem \eqref{eq:optnu} to find the optimal initial distribution $\nu^*(\cdot)$,
the easier one.
\begin{proposition}
\label{prop:optnu}
Assume that $\int \exp \left( \frac{v^*(0,y)}{\alpha}\right) p_{\tiny \mbox{noise}}(y) dy < \infty$.
We have:
\begin{equation}
\nu^*(x) = \frac{1}{C'} \exp \left( \frac{v^*(0,y)}{\alpha}\right) p_{\tiny \mbox{noise}}(y),
\end{equation}
where $C' = \int \exp \left( \frac{v^*(0,y)}{\alpha}\right) p_{\tiny \mbox{noise}}(y) dy$ is the normalizing constant.
\end{proposition}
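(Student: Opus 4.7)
The problem \eqref{eq:optnu} has exactly the same structure as the variational problem solved in Lemma \ref{lem:DVvar}, with the scalar reward $r(\cdot)$ replaced by the value function $v^*(0,\cdot)$ of the inner standard control problem \eqref{eq:optu} and with $p_{\tiny \mbox{pre}}$ replaced by $p_{\tiny \mbox{noise}}$. The plan is therefore to apply (a self-contained version of) the Donsker--Varadhan duality. Writing the objective as $J(\nu) := \mathbb{E}_{\nu(\cdot)}[v^*(0,Y)] - \alpha D_{KL}(\nu(\cdot), p_{\tiny \mbox{noise}}(\cdot))$, the claim reduces to showing that $J$ is maximized at the Gibbs-type tilt $\nu^*(y) \propto \exp(v^*(0,y)/\alpha)\, p_{\tiny \mbox{noise}}(y)$.

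First I would observe that the integrability assumption $\int \exp(v^*(0,y)/\alpha)\, p_{\tiny \mbox{noise}}(y)\, dy < \infty$ makes $\nu^*$ a well-defined probability density with normalizing constant $C'$. Next, for any competitor $\nu$ absolutely continuous with respect to $p_{\tiny \mbox{noise}}$ I would carry out the standard completion-of-squares manipulation: absorb the linear term into the logarithm to obtain
\[
J(\nu) = -\alpha \int \nu(y) \log \frac{\nu(y)}{\exp(v^*(0,y)/\alpha)\, p_{\tiny \mbox{noise}}(y)}\, dy = -\alpha D_{KL}(\nu(\cdot), \nu^*(\cdot)) + \alpha \log C'.
\]
Since $D_{KL}(\nu, \nu^*) \ge 0$ with equality iff $\nu = \nu^*$ almost everywhere, this identity immediately gives $J(\nu) \le \alpha \log C'$ and identifies $\nu^*$ as the unique maximizer; any $\nu$ not absolutely continuous with respect to $p_{\tiny \mbox{noise}}$ has $D_{KL}(\nu, p_{\tiny \mbox{noise}}) = +\infty$ and is trivially suboptimal.

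The computation above is essentially mechanical once $v^*(0,\cdot)$ is accepted as a given measurable function, so for the proof of this proposition itself there is no real obstacle. The subtlety --- which belongs to the forthcoming analysis rather than to this statement --- is to identify $v^*(0,\cdot)$ explicitly, e.g.\ via the Hamilton--Jacobi--Bellman equation associated with \eqref{eq:optu}, and to give primitive conditions on $r$, $\sigma$ and $p_{\tiny \mbox{noise}}$ under which the integrability hypothesis actually holds. As a backup derivation, a direct Lagrange multiplier argument maximizing $\int \nu(y)\, v^*(0,y)\, dy - \alpha \int \nu(y) \log(\nu(y)/p_{\tiny \mbox{noise}}(y))\, dy$ subject to $\int \nu = 1$ yields the same formula, with strict concavity of $-D_{KL}(\cdot, p_{\tiny \mbox{noise}})$ in $\nu$ confirming global optimality.
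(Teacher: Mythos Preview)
Your proposal is correct and matches the paper's approach exactly: the paper's proof is simply ``The proof is the same as Lemma~\ref{lem:DVvar}, by replacing $r(y)$ with $v^*(0,y)$ and $p_{\tiny\mbox{pre}}(y)$ with $p_{\tiny\mbox{noise}}(y)$,'' and Lemma~\ref{lem:DVvar} itself is just an appeal to the Donsker--Varadhan duality. Your write-up is in fact more self-contained than the paper's, since you spell out the identity $J(\nu) = -\alpha D_{KL}(\nu,\nu^*) + \alpha\log C'$ rather than citing the large-deviations literature.
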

\begin{proof}
The proof is the same as  Lemma \ref{lem:DVvar}, 
by replacing $r(y)$ with $v^*(0, y)$ and $p_{\tiny \mbox{pre}}(y)$ with $p_{\tiny \mbox{noise}}(y)$.
\end{proof}

\quad As we will see, the assumption $\int \exp \left( \frac{v^*(0,y)}{\alpha}\right) p_{\tiny \mbox{noise}}(y) dy < \infty$
is equivalent to 
$\int \exp \left( \frac{r(y)}{\alpha}\right) p_{\tiny \mbox{pre}}(y) dy < \infty$ in Lemma \ref{lem:DVvar}.

\quad Now we proceed to solving the problem \eqref{eq:optu}.
To this end, define the value-to-go:
\begin{equation}
\label{eq:vt}
v^*(t,y):= \max_{u \in \mathcal{A}_t} \mathbb{E}_{Q^{u, y}_{[t,T]}(\cdot)}\left[r(Y_T) - \frac{\alpha}{2} \int_t^T \left|\frac{u_s}{\overline{\sigma}(s)}\right|^2 ds \right],
\end{equation}
where $Q^{u, y}_{[t,T]}(\cdot)$ denotes the probability distribution of $(Y_s, \, t \le s \le T)$ in \eqref{eq:diffYct}  conditioned on
$Y_t = y$,
and $\mathcal{A}_t$ is the admissible set for $u$ on $[t, T]$.
The following result provides a verification theorem for the problem \eqref{eq:vt}.
\begin{proposition}
\label{prop:HJ}
Assume that there exists $V(t,y) \in C^{1,2}([0,T] \times \mathbb{R}^d)$ 
of at most polynomial growth in $y$,
which solves the Hamilton-Jacobi equation:
\begin{equation}
\label{eq:HJ}
\frac{\partial v}{\partial t} + \frac{\overline{\sigma}^2(t)}{2} \Delta v + \overline{b}(t,y) \cdot \nabla v + \frac{\overline{\sigma}^2(t)}{2 \alpha} |\nabla v|^2 = 0, \quad v(T,y) = r(y),
\end{equation}
and that $\frac{\overline{\sigma}^2(t)}{\alpha} \nabla V(\cdot,\cdot) \in \mathcal{A}$.
Then $v^*(t,y) = V(t,y)$, 
and the feedback control $u^*(t,y) = \frac{\overline{\sigma}^2(t)}{\alpha} \nabla V(t,y)$.
\end{proposition}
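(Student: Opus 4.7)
The plan is to invoke the classical verification argument: apply It\^o's formula to the candidate $V(t,y)$ along an arbitrary controlled trajectory, use the HJ equation \eqref{eq:HJ} to absorb the drift terms, and complete the square in $u$ to identify the optimizer. Concretely, for an arbitrary admissible $u \in \mathcal{A}_t$ with $(Y_s)$ driven by \eqref{eq:diffYct} from $Y_t = y$, It\^o's formula gives
\begin{equation*}
V(T, Y_T) - V(t,y) = \int_t^T \left(\frac{\partial V}{\partial s} + \overline{b}(s,Y_s)\cdot \nabla V + u_s \cdot \nabla V + \frac{\sigma^2(s)}{2}\Delta V \right)(s, Y_s)\, ds + M_T,
\end{equation*}
where $M_T := \int_t^T \sigma(s)\, \nabla V(s,Y_s) \cdot dB_s$ is a local martingale.

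Using $V(T,y) = r(y)$ and subtracting the running cost, one rewrites
\begin{equation*}
r(Y_T) - \frac{\alpha}{2}\int_t^T \left|\frac{u_s}{\sigma(s)}\right|^2 ds = V(t,y) + \int_t^T \mathcal{I}(s, Y_s, u_s)\, ds + M_T,
\end{equation*}
where, after substituting the HJ equation \eqref{eq:HJ} to cancel the $\partial_t V$, $\overline{b}\cdot\nabla V$, and $\tfrac{\sigma^2}{2}\Delta V$ terms, the integrand reduces to
\begin{equation*}
\mathcal{I}(s,y,u) = u\cdot \nabla V - \frac{\alpha}{2\sigma^2(s)}|u|^2 - \frac{\sigma^2(s)}{2\alpha}|\nabla V|^2 = - \frac{\alpha}{2\sigma^2(s)}\left| u - \frac{\sigma^2(s)}{\alpha} \nabla V(s,y)\right|^2 \le 0,
\end{equation*}
with equality if and only if $u = \tfrac{\sigma^2(s)}{\alpha}\nabla V(s,y)$. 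This completion-of-squares is exactly the Legendre duality between the quadratic cost $\tfrac{\alpha}{2\sigma^2}|u|^2$ and the Hamiltonian term $\tfrac{\sigma^2}{2\alpha}|\nabla V|^2$ in \eqref{eq:HJ}.

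The main technical obstacle is to verify that $\mathbb{E}_{Q^{u,y}_{[t,T]}}[M_T] = 0$, i.e.\ that $M_\cdot$ is a genuine martingale (not merely local). I would combine the at most polynomial growth of $V$ (hence of $\nabla V$, by standard interior estimates or simply by adding this as part of the hypothesis on $V$) with standard moment bounds on the controlled SDE \eqref{eq:diffYct}, which in turn rely on the admissibility conditions built into $\mathcal{A}$; this is the step where the seemingly technical definition of $\mathcal{A}$ pays off. Once this is justified, taking expectations in the identity above yields $\mathbb{E}_{Q^{u,y}_{[t,T]}}\bigl[r(Y_T) - \tfrac{\alpha}{2}\int_t^T |u_s/\sigma(s)|^2 ds\bigr] \le V(t,y)$ for every admissible $u$, and the bound is attained by the feedback $u^*(s,y) = \tfrac{\sigma^2(s)}{\alpha}\nabla V(s,y)$, which is admissible by hypothesis. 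Taking the supremum gives $v^*(t,y) = V(t,y)$ and identifies $u^*$ as optimal, completing the proof.
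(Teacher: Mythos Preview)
Your proposal is correct and is precisely the standard verification argument the paper invokes; the paper's own proof is terser, first deriving \eqref{eq:HJ} from the HJB equation \eqref{eq:HJB} by maximizing over $u$ and then simply citing the standard verification theorem (\cite{YZ99}) rather than spelling out the It\^o--completion-of-squares computation you give.
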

\begin{proof}
Let's first derive the Hamilton-Jacobi equation \eqref{eq:HJ}.
Dynamic program shows that $v^*(t,y)$ solves (in some sense) that Hamilton-Jacobi-Bellman equation:
\begin{equation}
\label{eq:HJB}
\max_u \left\{ \frac{\partial v}{\partial t} + \frac{\overline{\sigma}^2(t)}{2} \Delta v + (\overline{b}(t,y) + u) \cdot \nabla v - \frac{\alpha |u|^2}{2 \overline{\sigma}^2(t)}\right\} = 0.
\end{equation}
The maximum is attained at $u^*(t,y) = \frac{\overline{\sigma}^2(t)}{\alpha} \nabla v(t,y)$ (if admissible).
Plugging it back into \eqref{eq:HJB} yields the equation \eqref{eq:HJ}.
By standard verification argument (see \cite{YZ99}),
if a smooth function $V(t,y)$ solves the Hamilton-Jacobi equation \eqref{eq:HJ}
with $U(t, y):=\frac{\overline{\sigma}^2(t)}{\alpha} \nabla V(t,y)$ admissible,
then $v^* = V$ and $u^* = U$.
\end{proof}

\quad It is preferable to characterize the value function $v^*(t,y)$ as
the viscosity solution to the Hamilton-Jacobi equation \eqref{eq:HJ}.
Assuming that $\overline{b}(\cdot, \cdot)$, $\sigma(\cdot)$ and $r(\cdot)$ are bounded and Lipschitz,
the equation \eqref{eq:HJB} has a unique continuous viscosity solution by adapting the arguments in \cite{AT15}.
By \cite{PP13}, this solution is Lipschitz continuous, which can further be shown to be smooth \cite{Kry87}.
So Proposition \ref{prop:HJ} can be applied.

\quad The purpose of the stochastic control \eqref{eq:entproc} (or \eqref{eq:entproc2})
is to fine-tune the pretrained model for data generation.
We consider the distribution of $(Y_t, \, 0 \le t \le T)$ under the optimal control $Q^{u^*, \nu^*}_{[0,T]}(\cdot)$.
\begin{proposition}
\label{prop:keyd}
Let $(u^*, \nu^*)$ be the optimal solution to the problem \eqref{eq:entproc} (or \eqref{eq:entproc2}),
and let the assumptions of Proposition \ref{prop:optnu} and \ref{prop:HJ} hold.
We have:
\begin{equation}
\label{eq:Quvstar}
Q^{u^*, \nu^*}_{[0,T]}(dY_{\bullet})
= \frac{1}{C} \exp\left( \frac{r(Y_T)}{\alpha}\right) Q_{[0,T]}(dY_\bullet),
\end{equation}
where $Y_\bullet = (Y_t, \, 0 \le t \le T)$.
Consequently, the marginal distribution is 
\begin{equation}
\label{eq:Quvt}
Q^{u^*, \nu^*}_{t}(y) = \frac{1}{C} \mathbb{E}_{Q^y_{[t,T]}(\cdot)}\left[ \exp\left(\frac{r(Y_T)}{\alpha} \right)\right] Q_t(y), 
\quad 0 \le t \le T,
\end{equation}
where $Q^y_{[t,T]}(\cdot)$ denotes the probability distribution of $(Y_s, \, t \le s \le T)$ under $Q_{[t,T]}(\cdot)$
conditioned on $Y_t = y$.
\end{proposition}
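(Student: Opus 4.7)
My approach is to start from the Girsanov identity derived in the proof of Proposition \ref{prop:equiv}, substitute the feedback form $u^*(t,y) = \frac{\sigma^2(t)}{\alpha}\nabla v^*(t,y)$ from Proposition \ref{prop:HJ} (where $v^*=V$), and use It\^o's formula together with the Hamilton-Jacobi equation \eqref{eq:HJ} to collapse the Dol\'eans-Dade exponential into the single factor $\exp(r(Y_T)/\alpha)$.

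Concretely, applying It\^o's formula to $v^*(t,Y_t)$ under $Q^{u^*,\nu^*}_{[0,T]}$, with $Y$ obeying \eqref{eq:diffYct} with drift $\overline{b}+u^*$, the drift contributions $\partial_t v^* + (\overline{b}+u^*)\cdot\nabla v^* + \tfrac{1}{2}\sigma^2\Delta v^*$ simplify via \eqref{eq:HJ} and the identity $u^*\cdot\nabla v^* = \tfrac{\sigma^2}{\alpha}|\nabla v^*|^2$ to $\tfrac{\sigma^2}{2\alpha}|\nabla v^*|^2$. After writing $\sigma\nabla v^* = \alpha\,u^*/\sigma$ and dividing by $\alpha$, this yields
\[
\int_0^T \frac{u^*_t}{\sigma(t)}\cdot dB_t + \frac{1}{2}\int_0^T\left|\frac{u^*_t}{\sigma(t)}\right|^2 dt \;=\; \frac{v^*(T,Y_T) - v^*(0,Y_0)}{\alpha} \;=\; \frac{r(Y_T) - v^*(0,Y_0)}{\alpha},
\]
where the last equality uses the terminal condition $v^*(T,\cdot)=r(\cdot)$. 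This is exactly the exponent appearing in the Radon-Nikodym density from the proof of Proposition \ref{prop:equiv}, so substituting $\nu^*(y) = (C')^{-1}\exp(v^*(0,y)/\alpha)p_{\tiny \mbox{noise}}(y)$ from Proposition \ref{prop:optnu} makes the $v^*(0,Y_0)$-terms cancel and leaves
\[
\frac{dQ^{u^*,\nu^*}_{[0,T]}}{dQ_{[0,T]}}(Y) \;=\; \frac{1}{C'}\exp\left(\frac{r(Y_T)}{\alpha}\right).
\]

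To identify $C' = \int\exp(v^*(0,y)/\alpha)p_{\tiny \mbox{noise}}(y)\,dy$ with $C := \int\exp(r(y)/\alpha)p_{\tiny \mbox{pre}}(y)\,dy$, I would invoke a Feynman-Kac step: applying It\^o's formula to $\exp(v^*(t,Y_t)/\alpha)$ under $Q_{[0,T]}$, the Hamilton-Jacobi equation \eqref{eq:HJ} kills the drift identically, so this process is a local martingale. The $C^{1,2}$-plus-polynomial-growth hypothesis on $V$ in Proposition \ref{prop:HJ} upgrades it to a true martingale, yielding $\exp(v^*(0,y)/\alpha) = \mathbb{E}_{Q^y_{[0,T]}}[\exp(r(Y_T)/\alpha)]$; integrating against $p_{\tiny \mbox{noise}}$ gives $C'=C$ and, as a by-product, confirms the equivalence of the integrability conditions in Lemma \ref{lem:DVvar} and Proposition \ref{prop:optnu}.

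Finally, the marginal identity \eqref{eq:Quvt} follows by disintegration: for any bounded test function $f$, \eqref{eq:Quvstar} gives $\mathbb{E}_{Q^{u^*,\nu^*}_{[0,T]}}[f(Y_t)] = C^{-1}\mathbb{E}_{Q_{[0,T]}}[f(Y_t)\exp(r(Y_T)/\alpha)]$; conditioning on $Y_t$ and invoking the Markov property of the pretrained diffusion under $Q_{[0,T]}$ produces the stated formula. The main technical obstacle is the local-versus-true martingale verification in both the Girsanov and Feynman-Kac steps, but both are controlled by the admissibility hypothesis $(u^*,\nu^*)\in\mathcal{A}$ and the regularity of $V$, so the heart of the proof is really the elementary cancellation identified above.
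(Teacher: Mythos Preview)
Your argument is correct and reaches the same conclusion as the paper, but by a somewhat more direct route. The paper first proves the \emph{conditional} identity
\[
Q^{u^*,y}_{[0,T]}(dY_\bullet)\;=\;\frac{\exp\!\big(r(Y_T)/\alpha\big)\,Q^{y}_{[0,T]}(dY_\bullet)}{\mathbb{E}_{Q^{y}_{[0,T]}}\!\big[\exp(r(Y_T)/\alpha)\big]}
\]
by computing the KL divergence between the two sides and showing it equals $-v^*(0,y)/\alpha+\log\mathbb{E}_{Q^{y}_{[0,T]}}[\exp(r(Y_T)/\alpha)]$, which vanishes by the Feynman--Kac identity $\exp(v^*(0,y)/\alpha)=\mathbb{E}_{Q^{y}_{[0,T]}}[\exp(r(Y_T)/\alpha)]$; only afterwards does it integrate against $\nu^*$ and match the constants. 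You instead attack the full Radon--Nikodym derivative from Proposition~\ref{prop:equiv} head-on and use It\^o's formula on $v^*(t,Y_t)$ under $Q^{u^*,\nu^*}_{[0,T]}$ to collapse the Girsanov exponent to $(r(Y_T)-v^*(0,Y_0))/\alpha$ in a single stroke, so that substituting $\nu^*$ produces the cancellation immediately; Feynman--Kac is invoked only to identify $C'=C$. Both proofs rest on the same Cole--Hopf linearization of \eqref{eq:HJ}, but yours bypasses the KL-equals-zero detour and makes the mechanism of cancellation more explicit. One small caveat: polynomial growth of $V$ does not by itself force $\exp(v^*(t,Y_t)/\alpha)$ to be a true (rather than local) martingale under $Q_{[0,T]}$, since $\exp(V/\alpha)$ can grow super-polynomially; the paper handles this by citing the Feynman--Kac theorem as a black box, and your acknowledgment that this is ``the main technical obstacle'' is accurate.
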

\begin{proof}
Recall that $Q^{u^*, y}_{[0,T]}(\cdot)$ denotes the probability distribution of \eqref{eq:diffYct} conditioned on $Y_0 = y$.
We first show that
\begin{equation}
\label{eq:Qcond}
Q^{u^*, y}_{[0,T]}(y_\bullet) = \frac{\exp\left(\frac{r(Y_T)}{\alpha}\right) Q^y_{[0,T]}(Y_\bullet)}{\mathbb{E}_{Q^y_{[0,T]}(\cdot)}\left[ \exp\left(\frac{r(Y_T)}{\alpha}\right)\right]}.
\end{equation} 
Note that 
\begin{equation}
\label{eq:KLcond12}
\begin{aligned}
&\quad D_{KL}\left(Q^{u^*, y}_{[0,T]}(Y_\bullet),  \frac{\exp\left(\frac{r(Y_T)}{\alpha}\right) Q^y_{[0,T]}(Y_\bullet)}{\mathbb{E}_{Q^y_{[0,T]}(\cdot)}\left[ \exp\left(\frac{r(Y_T)}{\alpha}\right)\right]}\right) \\
&  = D_{KL}(Q^{u^*, y}_{[0,T]}(Y_\bullet), Q^y_{[0,T]}(Y_\bullet)) 
- \mathbb{E}_{Q^y_{[0,T]}(\cdot)}\left\{ \frac{r(Y_T)}{\alpha} - \log \mathbb{E}_{Q^y_{[0,T]}(\cdot)}\left[ \exp\left(\frac{r(Y_T)}{\alpha}\right)\right] \right\} \\
& =  \mathbb{E}_{Q^y_{[0,T]}(\cdot)}\left[ \frac{1}{2} \int_0^T \left| \frac{u^*(t, Y_t)}{\overline{\sigma}(t)}\right|^2dt -\frac{r(Y_T)}{\alpha} \right] +\log \mathbb{E}_{Q^y_{[0,T]}(\cdot)}\left[ \exp\left(\frac{r(Y_T)}{\alpha}\right)\right] \\
& = - v^*(0, y) + \log \mathbb{E}_{Q^y_{[0,T]}(\cdot)}\left[ \exp\left(\frac{r(Y_T)}{\alpha}\right)\right].
\end{aligned}
\end{equation}
where the first equation follows from the chain rule for KL divergence,
and the second equation is from Girsanov's theorem.
Recall that $v^*(t,y)$ solves the Hamilton-Jacobi equation \eqref{eq:HJ}.
Let $\mathcal{V}(t,y): = \exp(v^*(t,y)/\alpha)$. 
By It\^o's formula, $\mathcal{V}(t,y)$ solves the linear equation:
\begin{equation*}
\frac{\partial \mathcal{V}}{\partial t} + \frac{\overline{\sigma}^2(t)}{2} \Delta \mathcal{V} + \overline{b}(t, y) \cdot \mathcal{V} = 0,
\quad \mathcal{V}(T,y) = \exp \left( \frac{r(y)}{\alpha} \right).
\end{equation*}
The Feynman-Kac formula (\cite[Chapter 4, Theorem 4.2]{KS91}) yields:
\begin{equation}
\label{eq:FK}
\exp \left( \frac{v^*(t,y)}{\alpha}\right) = \mathbb{E}_{Q^y_{[t,T]}(\cdot)}\left[ \exp\left(\frac{r(Y_T)}{\alpha}\right)\right].
\end{equation}
By setting $t = 0$ in \eqref{eq:FK} and injecting it into \eqref{eq:KLcond12},
we get
$D_{KL}\left(Q^{u^*, y}_{[0,T]}(y_\bullet),  \frac{\exp\left(\frac{r(Y_T)}{\alpha}\right) Q^y_{[0,T]}(Y_\bullet)}{\mathbb{E}_{Q^y_{[0,T]}(\cdot)}\left[ \exp\left(\frac{r(Y_T)}{\alpha}\right)\right]}\right) =0$
so \eqref{eq:Qcond} is proved. 

\quad Next by Proposition \ref{prop:optnu},
we have:
\begin{equation}
\label{eq:nustar2}
\nu^*(y) = \frac{1}{C'} \exp\left( \frac{v^*(0,y)}{\alpha}\right) p_{\tiny \mbox{noise}}(y)
= \frac{1}{C'} \mathbb{E}_{Q^y_{[0,T]}(\cdot)}\left[ \exp\left(\frac{r(Y_T)}{\alpha}\right)\right] p_{\tiny \mbox{noise}}(y).
\end{equation}
where the second equation, again, follows \eqref{eq:FK}.
Furthermore, 
\begin{equation}
\label{eq:CC}
C' := \int \exp \left( \frac{v^*(0,y)}{\alpha}\right) p_{\tiny \mbox{noise}}(y) dy
= \mathbb{E}_{Q_{[t,T]}(\cdot)}\left[ \exp\left(\frac{r(Y_T)}{\alpha}\right)\right]
= \mathbb{E}_{p_{\tiny \mbox{pre}}(\cdot)}\left[ \exp\left(\frac{r(Y)}{\alpha}\right)\right] =: C.
\end{equation}
Combining \eqref{eq:Qcond}, \eqref{eq:nustar2} and \eqref{eq:CC} yields \eqref{eq:Quvstar}.
The identity \eqref{eq:Quvt} is obtained by marginalizing over $t$.
\end{proof}

\quad By specializing \eqref{eq:Quvt} at $t = T$, 
we get $Q^{u^*, \nu^*}_{T}(\cdot) = p_{\tiny \mbox{ftune}}(\cdot)$.
Indeed, the stochastic control problem \eqref{eq:entproc} 
yields the fine-tuned distribution \eqref{eq:cfsol}.
The equality \eqref{eq:CC} shows that 
the assumptions in Lemma \ref{lem:DVvar} and Proposition \ref{prop:optnu} are equivalent.

\quad In \cite{UZ24}, the optimal control problem \eqref{eq:optu} or \eqref{eq:vt}
is solved by neural ODEs/SDEs \cite{Chen18, Kid21}.
Here we offer some other thoughts.
By \eqref{eq:FK} and Proposition \ref{prop:HJ}, 
the optimal feedback control is given by
\begin{equation}
\label{eq:utu}
u^*(t,y) = \frac{\overline{\sigma}^2(t)}{\alpha} \nabla v^*(t,y)
= \overline{\sigma}^2(t) \nabla \left(\ln \mathbb{E}_{Q^y_{[t,T]}(\cdot)}\left[ \exp\left(\frac{r(Y_T)}{\alpha}\right)\right]\right).
\end{equation}
By ``simply" exchanging $\ln$ and $\mathbb{E}_{Q^y_{[t,T]}(\cdot)}$,
we get
\begin{equation}
\widetilde{u}^*(t, y):= \frac{\overline{\sigma}^2(t)}{\alpha} \nabla \, \mathbb{E}_{Q^y_{[t,T]}(\cdot)}[r(Y_T)] \stackrel{?}{\approx} u^*(t,y),
\end{equation}
where the term $\nabla \, \mathbb{E}_{Q^y_{[t,T]}(\cdot)}[r(Y_T)]$ can further be computed 
by classifier guidance \cite{Dh21}.
An alternative approach to compute $\nabla \, \mathbb{E}_{Q^y_{[t,T]}(\cdot)}[r(Y_T)]$ is by 
Malliavin calculus \cite{FL99} (that is well-studied in the mathematical finance literature.)
To be more precise,
\begin{equation}
\nabla \, \mathbb{E}_{Q^y_{[t,T]}(\cdot)}[r(Y_T)]
= \mathbb{E}_{Q^y_{[t,T]}(\cdot)}[\nabla \,r(Y_T) \cdot Z_T],
\end{equation}
where $(Z_s, \, t \le s \le T)$ (formally) solves:
\begin{equation}
dZ_s = \nabla \overline{b}(s, Y_s) Z_s ds , \quad Z_t = I.
\end{equation}
See \cite{CG07, GM05} for a rigorous treatment.
But it is not clear how good/bad the approximation \eqref{eq:utu} is in general.

\quad To sample the optimal initial distribution $\nu^*(\cdot)$, 
we consider another stochastic control problem:
\begin{equation}
\label{eq:gennu}
dY_t = q_t dt + \sigma'(t) dB'_t, \quad Y_0 \sim p_{\tiny \mbox{fix}}(\cdot),
\end{equation}
where $q_t$ is the control variable,
$\sigma': \mathbb{R}_+ \to \mathbb{R}_+$,
and
the distribution $p_{\tiny \mbox{fix}}(\cdot)$ is set so that 
the process
$dY'_t = \sigma'(t) dB'_t$, $Y'_0 \sim p_{\tiny \mbox{fix}}(\cdot)$
will generate $Y'_T \sim p_{\tiny \mbox{noise}}(\cdot)$ at time $T$.
Denote by $P^{q}_{[0,T]}(\cdot)$ be the probability distribution of $(Y_t, \, 0 \le t \le T)$,
defined by \eqref{eq:gennu}.,
and $P^q_t$ its marginal distribution at time $t$.
Solve the optimization problem:
\begin{equation}
q^*: = \argmax_q \mathbb{E}_{P^q_{[0,T]}(\cdot)}\left[v^*(0, Y_0) - \frac{\alpha}{2} \int_0^T  \left|\frac{q_t}{\sigma'(t)} \right|^2 dt\right],
\end{equation}
and it is easy to see that $P^{q^*}_T(\cdot) = \nu^*(\cdot)$.

\quad The following result quantifies the performance of generating $p_{\tiny \mbox{ftune}}(\cdot)$
using any approximation $(\widetilde{u}^*, \widetilde{\nu}^*)$.
\begin{proposition}
Define $\eta^2:= \max_{0 \le t \le T} \mathbb{E}_{Q^{u^*, \nu^*}_t(\cdot)} |\widetilde{u}^*(t, Y) - u^*(t,Y)|^2$.
We have:
\begin{equation}
\label{eq:TVbad}
D_{TV}(Q^{\widetilde{u}^*, \nu^*}_{T}(\cdot), p_{\tiny \mbox{ftune}}(\cdot)) \le \frac{\eta}{2} \sqrt{\int_0^T \frac{1}{\overline{\sigma}^2(t)} dt} + \sqrt{\frac{1}{2} D_{KL}(\widetilde{\nu}^*(\cdot), \nu^*(\cdot))}.
\end{equation}
\end{proposition}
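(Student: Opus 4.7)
The plan is to use the triangle inequality to split the total-variation error into a ``control error'' and an ``initial-distribution error'', and to bound each piece via Pinsker's inequality after controlling the corresponding path-space KL divergence with Girsanov's theorem. Since Proposition \ref{prop:keyd} identifies $Q^{u^*,\nu^*}_T = p_{\tiny \mbox{ftune}}(\cdot)$, introducing the intermediate law $Q^{\widetilde u^*,\nu^*}_T$ (approximate control, exact initial distribution) yields
$$D_{TV}(Q^{\widetilde u^*,\widetilde\nu^*}_T, p_{\tiny \mbox{ftune}}) \le D_{TV}(Q^{\widetilde u^*,\widetilde\nu^*}_T, Q^{\widetilde u^*,\nu^*}_T) + D_{TV}(Q^{\widetilde u^*,\nu^*}_T, Q^{u^*,\nu^*}_T),$$
so it suffices to handle these two summands separately. (I read the left-hand side of \eqref{eq:TVbad} as $Q^{\widetilde u^*,\widetilde\nu^*}_T$; with $\widetilde\nu^*$ on the right-hand side, the stated $Q^{\widetilde u^*,\nu^*}_T$ appears to be a typo.)

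For the first summand, both measures share the drift $\widetilde u^*$ but have different initial laws. Running the Girsanov computation from Proposition \ref{prop:equiv} verbatim, with $\widetilde\nu^*$ and $\nu^*$ playing the roles of $\nu$ and $p_{\tiny\mbox{noise}}$, the control terms in the path-space Radon-Nikodym derivative cancel and one is left with $D_{KL}(Q^{\widetilde u^*,\widetilde\nu^*}_{[0,T]}, Q^{\widetilde u^*,\nu^*}_{[0,T]}) = D_{KL}(\widetilde\nu^*(\cdot), \nu^*(\cdot))$. Pinsker combined with the data-processing inequality (contracting the path law to its time-$T$ marginal) then produces the second summand $\sqrt{D_{KL}(\widetilde\nu^*, \nu^*)/2}$ of \eqref{eq:TVbad}.

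For the second summand, both processes start from $\nu^*$ but evolve under different drifts, so Girsanov applied to the drift perturbation $u^* - \widetilde u^*$ gives
$$D_{KL}(Q^{u^*,\nu^*}_{[0,T]}, Q^{\widetilde u^*,\nu^*}_{[0,T]}) = \frac{1}{2}\,\mathbb{E}_{Q^{u^*,\nu^*}_{[0,T]}(\cdot)}\!\int_0^T \left|\frac{u^*(t,Y_t)-\widetilde u^*(t,Y_t)}{\sigma(t)}\right|^2 dt.$$
Fubini together with the definition of $\eta^2$ bounds this by $\frac{\eta^2}{2}\int_0^T \sigma^{-2}(t)\,dt$, and another application of Pinsker plus data processing produces the first summand $\tfrac{\eta}{2}\sqrt{\int_0^T \sigma^{-2}(t)\,dt}$. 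Adding the two bounds yields \eqref{eq:TVbad}.

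The main technical point is to ensure that Girsanov's theorem is legitimate in the second step, i.e., that the Doléans exponential of $\int_0^{\cdot} (u^*_s-\widetilde u^*_s)/\sigma(s)\,dB_s$ is a genuine $Q^{u^*,\nu^*}_{[0,T]}$-martingale. This follows as soon as both $u^*$ and $\widetilde u^*$ lie in the admissible set $\mathcal{A}$ -- the natural hypothesis to impose on any approximation of $u^*$ -- and otherwise can be secured by a Novikov-style condition on $u^*-\widetilde u^*$; modulo this verification, everything else in the argument is a routine combination of Girsanov, Pinsker, and data processing.
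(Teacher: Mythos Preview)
Your proof is correct and uses the same ingredients as the paper (Girsanov, data processing, Pinsker), with only a cosmetic difference in organization: the paper bounds the single path-space divergence $D_{KL}(Q^{u^*,\nu^*}_{[0,T]}, Q^{\widetilde u^*,\widetilde\nu^*}_{[0,T]})$ directly, obtains the additive decomposition $\tfrac{1}{2}\mathbb{E}\!\int_0^T |\cdot|^2 dt + D_{KL}(\widetilde\nu^*,\nu^*)$ in one Girsanov step, and then applies Pinsker followed by $\sqrt{a+b}\le\sqrt{a}+\sqrt{b}$, whereas you split at the TV level first via the triangle inequality and run Pinsker twice. Your reading of the typo in the statement ($\widetilde\nu^*$ in place of $\nu^*$ on the left of \eqref{eq:TVbad}) is also correct.
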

\begin{proof}
By the data processing inequality,
we get
\begin{equation*}
\begin{aligned}
D_{KL}(p_{\tiny \mbox{ftune}}(\cdot), Q^{\widetilde{u}^*, \widetilde{\nu}^*}_{T}(\cdot)) 
& = D_{KL}(Q^{u^*, \nu^*}_{T}(\cdot), Q^{\widetilde{u}^*, \widetilde{\nu}^*}_{T}(\cdot)) \\
& \le  D_{KL}(Q^{u^*, \nu^*}_{[0,T]}(\cdot), Q^{\widetilde{u}^*, \widetilde{\nu}^*}_{[0,T]}(\cdot)) \\
& = \frac{1}{2} \mathbb{E}_{Q^{u^*, \nu^*}_{[0,T]}(\cdot)}\left(\int_0^T \left| \frac{u^*(t, Y_t) - \widetilde{u}^*(t,Y_t)}{\overline{\sigma}(t)} \right|^2  dt\right) + D_{KL}(\widetilde{\nu}^*(\cdot), \nu^*(\cdot)) \\
& \le \frac{\eta^2}{2} \int_0^T \frac{1}{\overline{\sigma}^2(t)} dt +  D_{KL}(\widetilde{\nu}^*(\cdot), \nu^*(\cdot)).
\end{aligned}
\end{equation*}
Applying Pinsker's inequality yields \eqref{eq:TVbad}.
\end{proof}

%-------------------------------------------------------------------------------------------------
\section{Extension to regularization by $f$-divergence}
\label{sc4}

\quad In this section, we go beyond entropy regularization,
and consider the problem of fine-tuning regularized by $f$-divergence.

\quad Let $f: \mathbb{R}_+ \to (-\infty, +\infty]$
be a convex function such that 
$f(x) < \infty$ for $x > 0$,
$f(1) = 0$ and $f(0) = \lim_{t \to 0^+} f(t)$.
Given two probability distribution $p(\cdot)$, $q(\cdot)$ on suitably nice space, 
the $f$-divergence between $p(\cdot)$ and $q(\cdot)$ is defined by:
\begin{equation}
\label{eq:Df}
D_f(P, Q):= \int f\left(\frac{dp}{dq}\right) dq = \mathbb{E}_{q(\cdot)}\left[ f\left(\frac{dp}{dq}\right)\right].
\end{equation}
The KL divergence corresponds to the choice $f(t) = t \ln t$.
Other popular examples include:
\begin{itemize}[itemsep = 3 pt]
\item
Forward KL: $f(t) = -\ln t$.
\item
$\gamma$-divergence: $f(t) = \frac{t^{1 - \gamma} - (1 - \gamma)t -\gamma}{\gamma(\gamma - 1)}$.
\item
Total variation distance: $f(t) = \frac{1}{2}|t-1|$.
\end{itemize}
The $\gamma$-divergence becomes forward KL when $\gamma \to 1$,
and KL when $\gamma \to 0$.
So the KL divergence is also referred to as the reverse KL.
See \cite[Chapter 7]{PW23} 
for an introduction to the $f$-divergence.

\quad Inspired by entropy-regularized fine-tuning \eqref{eq:entFT},
an obvious candidate is:
\begin{equation}
\label{eq:entfFT}
p^f_{\star}(\cdot):=
\argmax_p \mathbb{E}_{p(\cdot)}[r(Y)] - \alpha D_{f}(p(\cdot), p_{\tiny \mbox{pre}}(\cdot)),
\end{equation}
where the maximization is over all probability distribution on $\mathbb{R}^d$.
Similar to entropy-regularized fine-tuning \eqref{eq:entFT},
the problem \eqref{eq:entfFT} is solvable under some conditions on $f$.
The following result was proved by \cite{WJ24} 
in a different but closely related context 
of direct preference optimization \cite{RS23}.
For ease of reference, we record the proof.
\begin{proposition}
\label{prop:pfftune}
Assume that $p_{\tiny \mbox{pre}}(\cdot)$ has full support on $\mathbb{R}^d$,
and $f'$ is strictly increasing, and $0$ is not in the domain of $f'$.
Also assume that 
there exists $\lambda$ such that
$\frac{r(\cdot) - \lambda}{\alpha}$ is in the range of $f'$,
and 
$\int (f')^{-1}\left(\frac{r(y) - \lambda}{\alpha} \right) p_{\tiny \mbox{pre}}(y) dy =1$.
We have:
\begin{equation}
\label{eq:pfftune}
p^f_{\star}(y) = (f')^{-1}\left(\frac{r(y) - \lambda}{\alpha} \right) p_{\tiny \mbox{pre}}(y).
\end{equation}
\end{proposition}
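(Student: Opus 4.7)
My plan is to treat this as a constrained convex optimization: the objective $\mathbb{E}_{p(\cdot)}[r(Y)] - \alpha D_f(p(\cdot), p_{\tiny\mbox{pre}}(\cdot))$ is concave in $p(\cdot)$ (concave linear term minus a convex $f$-divergence), and the constraint set $\{p : p \geq 0, \int p(y)\,dy = 1\}$ is convex. So first-order (KKT) conditions will be both necessary and sufficient, and the formula \eqref{eq:pfftune} should drop out of the stationarity condition.

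First, I would form the Lagrangian with a multiplier $\lambda$ for the normalization constraint:
\begin{equation*}
\mathcal{L}(p, \lambda) = \int r(y) p(y)\,dy - \alpha \int f\!\left(\frac{p(y)}{p_{\tiny\mbox{pre}}(y)}\right) p_{\tiny\mbox{pre}}(y)\,dy - \lambda\left(\int p(y)\,dy - 1\right).
\end{equation*}
Taking the functional derivative in $p$ and setting it to zero gives the pointwise condition
\begin{equation*}
r(y) - \alpha f'\!\left(\frac{p(y)}{p_{\tiny\mbox{pre}}(y)}\right) - \lambda = 0.
\end{equation*}
Since $f'$ is assumed strictly increasing, it is invertible on its range, and the hypothesis that $\frac{r(\cdot)-\lambda}{\alpha}$ lies in the range of $f'$ lets us solve for the ratio, yielding the candidate \eqref{eq:pfftune}. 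The assumption that $0$ is not in the domain of $f'$ means $(f')^{-1}$ takes values in $(0,\infty)$, so the candidate is automatically nonnegative, and the integrability hypothesis forces it to be a valid probability density.

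To confirm optimality rigorously (without invoking infinite-dimensional KKT theory), I would give a direct convexity argument. For any admissible $p(\cdot)$, convexity of $f$ at the point $p^f_\star(y)/p_{\tiny\mbox{pre}}(y)$ gives
\begin{equation*}
f\!\left(\frac{p(y)}{p_{\tiny\mbox{pre}}(y)}\right) \geq f\!\left(\frac{p^f_\star(y)}{p_{\tiny\mbox{pre}}(y)}\right) + f'\!\left(\frac{p^f_\star(y)}{p_{\tiny\mbox{pre}}(y)}\right)\left(\frac{p(y) - p^f_\star(y)}{p_{\tiny\mbox{pre}}(y)}\right).
\end{equation*}
Multiplying by $p_{\tiny\mbox{pre}}(y)$, integrating, using $f'(p^f_\star/p_{\tiny\mbox{pre}}) = (r - \lambda)/\alpha$, and exploiting $\int p\,dy = \int p^f_\star\,dy = 1$ so the $\lambda$ terms cancel, one obtains
\begin{equation*}
\alpha D_f(p(\cdot), p_{\tiny\mbox{pre}}(\cdot)) - \alpha D_f(p^f_\star(\cdot), p_{\tiny\mbox{pre}}(\cdot)) \geq \int r(y) (p(y) - p^f_\star(y))\,dy,
\end{equation*}
which rearranges exactly to the inequality $\mathbb{E}_{p^f_\star}[r(Y)] - \alpha D_f(p^f_\star, p_{\tiny\mbox{pre}}) \geq \mathbb{E}_{p}[r(Y)] - \alpha D_f(p, p_{\tiny\mbox{pre}})$.

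The main obstacle is not the algebra (which is short once the convexity inequality is deployed) but justifying the hypotheses cleanly: one must check that the integrability of $f(p/p_{\tiny\mbox{pre}}) p_{\tiny\mbox{pre}}$ holds for both $p^f_\star$ and competing $p$, and that the tangent-line inequality above is integrable so the manipulations are legitimate. The strict monotonicity of $f'$ additionally buys uniqueness of the maximizer, since equality in the convexity inequality forces $p(y)/p_{\tiny\mbox{pre}}(y) = p^f_\star(y)/p_{\tiny\mbox{pre}}(y)$ a.e. The existence of the normalizing $\lambda$ is an implicit hypothesis and would typically follow, for specific $f$, from an intermediate value argument on $\lambda \mapsto \int (f')^{-1}\!\left(\frac{r(y)-\lambda}{\alpha}\right) p_{\tiny\mbox{pre}}(y)\,dy$, but here it is simply assumed.
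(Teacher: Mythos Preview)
Your proposal is correct and follows the same core idea as the paper: form a Lagrangian, take the first-order condition, and invert $f'$ to obtain the candidate \eqref{eq:pfftune}. The paper carries an additional dual variable $\kappa(y)\ge 0$ for the pointwise constraint $p(y)\ge 0$ and then argues $\kappa\equiv 0$ via complementary slackness (using that $0$ is excluded from the domain of $f'$), whereas you bypass this by observing directly that $(f')^{-1}$ lands in $(0,\infty)$. The more substantive difference is in sufficiency: the paper simply invokes the KKT theorem from \cite{BV04}, which is a finite-dimensional reference and leaves the infinite-dimensional justification implicit, while you supply a self-contained verification via the tangent-line inequality for convex $f$. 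Your route is slightly longer but more elementary and avoids the question of whether KKT applies in this function-space setting; it also yields uniqueness for free from strict monotonicity of $f'$. The integrability caveats you flag are real but are equally unaddressed in the paper's argument.
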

\begin{proof}
Define the Lagrangian 
\begin{equation*}
\mathcal{L}(p, \lambda, \alpha):=
\mathbb{E}_{p(\cdot)}[r(Y)] - \alpha \mathbb{E}_{p_{\tiny \mbox{pre}}(\cdot)}\left[ f\left( \frac{p(Y)}{p_{\tiny \mbox{pre}}(Y)}\right)\right] - \lambda \left( \int \pi(y) dy - 1\right) + \mathbb{E}_{p(\cdot)}[\kappa(Y)],
\end{equation*}
where $p$ is the primal variable, and $\lambda$, $\kappa(y)$ are dual variables. 
By the KKT condition (see \cite[Section 5.5]{BV04}), we get:
\begin{itemize}[itemsep = 3 pt]
\item
First order condition: $\nabla_p \mathcal{L}(p, \lambda, \alpha) = 0$, which yields
\begin{equation}
\label{eq:KKTstat}
r(y) - \alpha f'\left( \frac{p(y)}{p_{\tiny \mbox{pre}}(y)}\right) - \lambda + \kappa(y) = 0, \quad \mbox{for all } y.
\end{equation}
\item
Primal feasibility: $\int p(y) dy = 1$ and $p(y) \ge 0$.
\item
Dual feasibility: $\kappa(y) \ge 0$ for all $y$.
\item
Complementary slackness: $\kappa(y) p(y) = 0$ for all $y$.
\end{itemize}
Since $f'$ is strictly increasing (invertible), we obtain by \eqref{eq:KKTstat}:
\begin{equation*}
p(y) = (f')^{-1}\left(\frac{r(y) - \lambda + \kappa(y)}{\alpha} \right) p_{\tiny \mbox{pre}}(y).
\end{equation*}
Because $p_{\tiny \mbox{pre}}(y) > 0$ and $0$ is not in the domain of $f'$,
we have $p(y) > 0$ for all $y$.
Thus, the complementary slackness implies $\kappa(y) = 0$.
The constant $\lambda$ is determined by the primal condition.
This yield the formula \ref{eq:pfftune}.
\end{proof}

\quad It was claimed in \cite[p.5]{WJ24} that the solution \eqref{eq:pfftune}
to the (constrained) problem \eqref{eq:entfFT}
can be written in the form 
$\frac{1}{C_f} (f')^{-1}\left(\frac{r(\cdot)}{\alpha}\right) p_{\tiny \mbox{pre}}(\cdot)$.
This seems to be wrong because
$\frac{r(\cdot)}{\alpha}$ may even not lie in the range of $f'$.
In fact, $\lambda$ in \eqref{eq:pfftune} plays the role of the normalizing constant, 
and can be factorized out only for entropy regularization 
where $(f')^{-1}(t) = e^{t - 1}$.
Sampling the distribution \eqref{eq:pfftune}
is generally difficult due to the complex dependence in the normalizing constant $\lambda$.

\quad As an alternative, we can first solve the problem \eqref{eq:entfFT} with no constraint,
and then normalize it to a probability distribution
(which was claimed in \cite{WJ24} to be the ``solution".)
This leads to the following fine-tuning proposal.
Recall that $r: \mathbb{R}^d \to \mathbb{R}_+$ is assumed to be nonnegative. 
\begin{definition}
\label{def:fdiv}
Assume that $(f')^{-1}(\cdot)$ is well-defined on the range of $\frac{r(\cdot)}{\alpha}$,
or $(f')^{-1}(\cdot)$ is not but $(-f')^{-1}(\cdot)$ is well-defined on the range of $\frac{r(\cdot)}{\alpha}$,
which we denote by $(f_{\pm}')^{-1}(\cdot)$.
Also assume that 
$\int (f'_{\pm})^{-1}\left(\frac{r(y)}{\alpha}\right) p_{\tiny \mbox{pre}}(y)dy < \infty$.
Define
\begin{equation}
\label{eq:fdivft}
p^f_{\tiny \mbox{ftune}}(y) = \frac{1}{C_f} (f'_\pm)^{-1}\left( \frac{r(y)}{\alpha}\right) p_{\tiny \mbox{pre}}(y),
\end{equation}
where $C_f: = \int (f'_{\pm})^{-1}\left(\frac{r(y)}{\alpha}\right) p_{\tiny \mbox{pre}}(y)dy$
is the normalizing constant.
We abuse by calling \eqref{eq:fdivft} (instead of \eqref{eq:entfFT})
fine-tuning regularized by $f$-divergence.
\end{definition}

\quad Let's illustrate Definition \ref{def:fdiv} with examples.
\begin{itemize}[itemsep = 3 pt]
\item
KL divergence: $f'(t) = \ln t + 1$.
So $p^f_{\tiny \mbox{ftune}}(y) \propto \exp\left(\frac{r(y)}{\alpha} \right) p_{\tiny \mbox{pre}}(y)$,
and the definitions \eqref{eq:entfFT} and \eqref{eq:fdivft} agree.
\item
Forward KL: $f'(t) = -\frac{1}{t}$ so $(f'_{\pm})^{-1}(t) = (- f')^{-1}(t) = \frac{1}{t}$.
We have $p^f_{\tiny \mbox{ftune}}(y) \propto \alpha p_{\tiny \mbox{pre}}(y)/r(y)$.
\item
$\gamma$-divergence: $f'(t) = (1 - t^{-\gamma})/\gamma$. 
So if $r(\cdot)$ is bounded from below by $\alpha /\gamma$, we get
$(f'_\pm)^{-1}(t) = (-f')^{-1}(t) = (\gamma t - 1)^{-\frac{1}{\gamma}}$
and $p^f_{\tiny \mbox{ftune}}(y) \propto \left(\frac{\gamma\, r(y)}{\alpha} - 1\right)^{-\frac{1}{\gamma}} p_{\tiny \mbox{pre}}(y)$;
if $r(\cdot)$ is bounded from above by $\alpha /\gamma$,
we get 
$(f'_\pm)^{-1}(t) = (f')^{-1}(t) = (1 - \gamma t)^{-\frac{1}{\gamma}}$
and 
$p^f_{\tiny \mbox{ftune}}(y) \propto \left(1 - \frac{\gamma\, r(y)}{\alpha} \right)^{-\frac{1}{\gamma}} p_{\tiny \mbox{pre}}(y)$.
\item
Total variation distance: $(f'_\pm)^{-1}$ is not well-defined.
\end{itemize}
We point out that under Definition \ref{def:fdiv},
the fine-tuned distribution regularized by $\gamma$-divergence with $\gamma = 1$
differs from that regularized by forward KL, due to a shift in the $f$-derivatives.

\quad The advantage of the definition \eqref{eq:fdivft} is that the normalizing constant is factored out in $(f'_{\pm})^{-1}$,
so the machinery in Section \ref{sc3} can be applied 
to generate $p^f_{\tiny \mbox{ftune}}(\cdot)$
by only modifying the reward function.
Let 
\begin{equation}
\label{eq:rf}
r_f(y):= \ln \left( (f'_\pm)^{-1}\left( \frac{r(y)}{\alpha}\right) \right),
\end{equation}
so $p^f_{\tiny \mbox{ftune}}(y) = \frac{1}{C_f} \exp(r_f(y)) p_{\tiny \mbox{pre}}(y)$.
The following result is a corollary of Proposition \ref{prop:TVd}
to quantify the difference between the fine-tuned distribution \eqref{eq:fdivft}
and the original data distribution $p_{\tiny \mbox{data}}(\cdot)$.
\begin{corollary}
Assume that $\kappa: = \inf_y r_f(y) > -\infty$.
We have:
\begin{equation}
\label{eq:TVd2}
D_{TV}(p^f_{\tiny \mbox{ftune}}(\cdot), p_{\tiny \mbox{data}}(\cdot))
\le D_{TV}(p_{\tiny \mbox{pre}}(\cdot), p_{\tiny \mbox{data}}(\cdot)) + \frac{e^{-\kappa}}{2} \sqrt{\mathbb{E}_{p_{\tiny \mbox{pre}}(\cdot)}\left[ \left((f'_\pm)^{-1}\left( \frac{r(Y)}{\alpha}\right) \right)^2\right]}.
\end{equation}
\end{corollary}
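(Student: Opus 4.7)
The plan is to view the $f$-divergence fine-tuned distribution $p^f_{\tiny \mbox{ftune}}(\cdot)$ as an entropy-regularized fine-tuned distribution with a transformed reward, and then to invoke Proposition \ref{prop:TVd}. By \eqref{eq:fdivft} and \eqref{eq:rf} we can write $p^f_{\tiny \mbox{ftune}}(y) = C_f^{-1}\exp(r_f(y))\, p_{\tiny \mbox{pre}}(y)$, which is exactly the shape of the entropy-regularized density \eqref{eq:cfsol}, but with ``reward'' $\alpha\, r_f(y)$. The only snag is that the proof of Proposition \ref{prop:TVd} uses $r(y) \ge 0$ in order to apply Jensen's inequality with the convex function $x \mapsto \log x + \tfrac{1}{2}x^2$ on $[1,\infty)$, whereas $r_f$ may take negative values.

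Under the hypothesis $\kappa = \inf_y r_f(y) > -\infty$, I would split $r_f(y) = \kappa + \tilde r(y)$ with $\tilde r \ge 0$. Both $e^{r_f(y)}$ and the normalizing constant $C_f$ pick up the same factor $e^{\kappa}$, which cancels, leaving the equivalent representation
$$
p^f_{\tiny \mbox{ftune}}(y) = \frac{1}{\tilde C}\exp(\tilde r(y))\, p_{\tiny \mbox{pre}}(y), \qquad \tilde C := \int \exp(\tilde r(y))\, p_{\tiny \mbox{pre}}(y)\, dy.
$$
The intermediate bound obtained in the proof of Proposition \ref{prop:TVd} (the Jensen/Pinsker chain leading up to the triangle inequality step), applied with reward $\alpha\,\tilde r \ge 0$, then gives
$$
D_{TV}(p^f_{\tiny \mbox{ftune}}(\cdot), p_{\tiny \mbox{pre}}(\cdot)) \le \frac{1}{2}\sqrt{\mathbb{E}_{p_{\tiny \mbox{pre}}(\cdot)}\bigl[e^{2\tilde r(Y)}\bigr]} = \frac{e^{-\kappa}}{2}\sqrt{\mathbb{E}_{p_{\tiny \mbox{pre}}(\cdot)}\bigl[e^{2 r_f(Y)}\bigr]},
$$
and substituting the identity $e^{r_f(Y)} = (f'_\pm)^{-1}(r(Y)/\alpha)$ from \eqref{eq:rf} turns the expectation into the one appearing in \eqref{eq:TVd2}. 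A final triangle inequality against $p_{\tiny \mbox{data}}(\cdot)$ yields the claimed bound.

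The only real obstacle is administrative: matching notation to the entropy-regularized setting of Proposition \ref{prop:TVd} and correctly absorbing the lack of non-negativity of $r_f$ through the shift $\tilde r = r_f - \kappa$, which is precisely what the hypothesis $\kappa > -\infty$ allows. No new analytic ingredient is needed beyond those already in Section \ref{sc3}, which is consistent with the result being presented as a corollary rather than an independent proposition.
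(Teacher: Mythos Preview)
Your proposal is correct and essentially mirrors the paper's proof: where you shift the reward by $\kappa$ to force $\tilde r \ge 0$ and then invoke Proposition~\ref{prop:TVd} verbatim, the paper instead keeps $r_f$ and applies Jensen's inequality to the convex function $x \mapsto \log x + \tfrac{e^{-2\kappa}}{2}x^2$ on $[e^\kappa,\infty)$. The two are equivalent under the substitution $x \leftrightarrow e^{\kappa} x$, and both land on $D_{KL}(p^f_{\tiny \mbox{ftune}}, p_{\tiny \mbox{pre}}) \le \tfrac{e^{-2\kappa}}{2}\,\mathbb{E}_{p_{\tiny \mbox{pre}}}[e^{2r_f(Y)}]$ before Pinsker and the triangle inequality.
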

\begin{proof}
It suffices to note that $x \to \log x + \frac{e^{-2 \kappa} x^2}{2}$
is convex on $[e^{\kappa}, \infty)$.
The same argument in Proposition \ref{prop:TVd} shows:
\begin{equation*}
D_{KL}(p^f_{\tiny \mbox{ftune}}(\cdot), p_{\tiny \mbox{pre}}(\cdot)) \le \frac{e^{-2 \kappa}}{2} \mathbb{E}_{p_{\tiny \mbox{pre}}(\cdot)}[e^{2 r_f(Y)}]
= \frac{e^{-2 \kappa}}{2} \mathbb{E}_{p_{\tiny \mbox{pre}}(\cdot)}\left[ \left((f'_\pm)^{-1}\left( \frac{r(Y)}{\alpha}\right) \right)^2\right].
\end{equation*}
This yields the bound \eqref{eq:TVd2}.
\end{proof}

\quad Following the stochastic control setting in Section \ref{sc3}, 
we consider the problem:
\begin{equation}
\label{eq:scf}
\begin{aligned}
(u^*, \nu^*):&= \argmax_{u, \nu) \in \mathcal{A}} \mathbb{E}_{Q^{u, \nu}_{[0,T]}(\cdot)}[r_f(Y_t)] - D_{KL}(Q^{u, \nu}_{[0,T]}(\cdot), Q_{[0,T]}(\cdot)) \\
& = \argmax_{(u, \nu) \in \mathcal{A}}
\int \mathbb{E}_{Q^{u, y}_{[0,T]}(\cdot)}\left[r_f(Y_T) - \frac{1}{2} \int_0^T \left|\frac{u_t}{\overline{\sigma}(t)}\right|^2 dt \right] \nu(y) dy -  D_{KL}(\nu(\cdot), p_{\tiny \mbox{noise}}(\cdot)),
\end{aligned}
\end{equation}
where the parameter $\alpha$ is included in $r_f(\cdot)$,
so it does not appear in the regularizer $D_{KL}(\nu(\cdot), p_{\tiny \mbox{noise}}(\cdot))$.

\quad Here we use the entropy regularizer instead of $f$-divergence,
i.e., $D_{f}(Q^{u, \nu}_{[0,T]}(\cdot), Q_{[0,T]}(\cdot))$
in the stochastic control problem.
This is because 
it is difficult to simplify $D_{f}(Q^{u, \nu}_{[0,T]}(\cdot), Q_{[0,T]}(\cdot))$ 
due to lack of the chain rule (for general $f$-divergence),
which will incur nonlocal terms.
So in the problem \eqref{eq:scf},
the ``regularization by $f$-divergence" is reflected
in the reward function $r_f(\cdot)$.
As a consequence,
Propositions \ref{prop:optnu}--\ref{prop:keyd}
are easily adapted to solve the control problem \eqref{eq:scf}.
We summarize the results in the following corollary. 
\begin{corollary}
Define the value-to-go:
\begin{equation}
v_f^*(t,y):= \max_{u \in \mathcal{A}_t} \mathbb{E}_{Q^{u, y}_{[t,T]}(\cdot)}\left[r_f(Y_T) - \frac{1}{2} \int_t^T \left|\frac{u_s}{\overline{\sigma}(s)}\right|^2 ds \right].
\end{equation}
\begin{enumerate}[itemsep = 3 pt]
\item
Assume that there exists $V(t,y) \in C^{1,2}([0,T] \times \mathbb{R}^d)$ 
of at most polynomial growth in $y$,
which solves the Hamilton-Jacobi equation:
\begin{equation}
\frac{\partial v}{\partial t} + \frac{\overline{\sigma}^2(t)}{2} \Delta v + \overline{b}(t,y) \cdot \nabla v + \overline{\sigma}^2(t) |\nabla v|^2 = 0, \quad v(T,y) = r_f(y),
\end{equation}
and that $\overline{\sigma}^2(t) \nabla V(\cdot,\cdot) \in \mathcal{A}$.
Then $v_f^*(t,y) = V(t,y)$, and the feedback control $u_f^*(t,y) = \overline{\sigma}^2(t)\nabla V(t,y)$.
\item
The optimal initial distribution is given by
\begin{equation}
\nu_f^*(x) = \frac{1}{C_f} \exp \left(v_f^*(0,y)\right) p_{\tiny \mbox{noise}}(y).
\end{equation}
\item
Let $(u^*_f, \nu^*_f)$ be the optimal solution to the problem \eqref{eq:scf}, specified by (1) and (2).
The distribution of the process under the optimal control is:
\begin{equation}
Q^{u_f^*, \nu_f^*}_{[0,T]}(dY_{\bullet})
= \frac{1}{C_f} \exp\left( r_f(Y_T)\right) Q_{[0,T]}(dY_\bullet),
\end{equation}
and in particular, $Q^{u_f^*, \nu_f^*}_{T}(\cdot) = p^f_{\tiny \mbox{ftune}}(\cdot)$.
\end{enumerate}
\end{corollary}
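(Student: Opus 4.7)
The plan is to observe that the stochastic control problem \eqref{eq:scf} is structurally identical to the KL-regularized problem \eqref{eq:entproc2} after the substitutions $\alpha \mapsto 1$ (the parameter has been absorbed into $r_f$) and $r \mapsto r_f$. Consequently, the three claims of the corollary are direct specializations of Propositions \ref{prop:HJ}, \ref{prop:optnu}, and \ref{prop:keyd}, with at most cosmetic modifications of constants. First I would split the objective in \eqref{eq:scf} as a function of $(u,\nu)$ into a part involving only $u$ conditioned on the initial point, plus a KL penalty on $\nu$; this separation, which is already proved in \eqref{eq:entproc2}, reduces the problem to the two-step program \eqref{eq:optu}--\eqref{eq:optnu} with the new reward.

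For part (1), I would write down the HJB equation associated with \eqref{eq:scf},
\begin{equation*}
\max_u \left\{ \frac{\partial v}{\partial t} + \frac{\sigma^2(t)}{2} \Delta v + (\overline{b}(t,y) + u) \cdot \nabla v - \frac{|u|^2}{2\sigma^2(t)}\right\} = 0, \quad v(T,y) = r_f(y),
\end{equation*}
compute the pointwise maximizer $u^*(t,y) = \sigma^2(t) \nabla v(t,y)$, and substitute back to obtain the semilinear equation in the statement. The identifications $v^*_f = V$ and $u^*_f = \sigma^2(t) \nabla V$ then follow from the standard verification theorem, applied exactly as in the proof of Proposition \ref{prop:HJ}, using admissibility of $\sigma^2(t)\nabla V$ to rule out local-martingale issues.

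For part (2), I would apply the Donsker--Varadhan duality formula (as in Lemma \ref{lem:DVvar}) to the reduced outer optimization $\argmax_\nu \mathbb{E}_{\nu}[v^*_f(0,Y)] - D_{KL}(\nu,p_{\tiny\mbox{noise}})$ to get $\nu^*_f(y) \propto \exp(v^*_f(0,y)) p_{\tiny\mbox{noise}}(y)$. For part (3), I would mirror the proof of Proposition \ref{prop:keyd}: set $\mathcal{V}_f(t,y) := \exp(v^*_f(t,y))$, verify via It\^o's formula that $\mathcal{V}_f$ satisfies a linear Kolmogorov equation, and invoke Feynman--Kac to obtain
\begin{equation*}
\exp(v^*_f(t,y)) = \mathbb{E}_{Q^y_{[t,T]}(\cdot)}[\exp(r_f(Y_T))].
\end{equation*}
A Girsanov computation identical to \eqref{eq:KLcond12} then shows that the conditional law $Q^{u^*_f,y}_{[0,T]}$ coincides with the reweighted law $\exp(r_f(Y_T)) Q^y_{[0,T]}/\mathbb{E}_{Q^y_{[0,T]}}[\exp(r_f(Y_T))]$. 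Combining this with $\nu^*_f$ from part (2) yields the joint representation $Q^{u^*_f,\nu^*_f}_{[0,T]} = \tfrac{1}{C_f} \exp(r_f(Y_T)) Q_{[0,T]}$, and marginalizing at $t = T$ together with \eqref{eq:fdivft} gives $Q^{u^*_f,\nu^*_f}_T(\cdot) = p^f_{\tiny\mbox{ftune}}(\cdot)$.

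The only genuine bookkeeping step (which I expect to be the sole nontrivial point) is verifying that the normalizing constants match: namely, that
\begin{equation*}
\int \exp(v^*_f(0,y)) p_{\tiny\mbox{noise}}(y)\,dy = \mathbb{E}_{p_{\tiny\mbox{pre}}(\cdot)}[\exp(r_f(Y))] = C_f,
\end{equation*}
which follows by combining the Feynman--Kac identity above at $t=0$ with the tower property and the definition $p_{\tiny\mbox{pre}}(\cdot) = Q_T(\cdot)$, exactly as in \eqref{eq:CC}. No new analytic difficulty arises beyond what was already handled in Section \ref{sc3}.
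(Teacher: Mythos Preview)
Your proposal is correct and follows precisely the paper's approach: the paper does not give a separate proof for this corollary but simply states that Propositions \ref{prop:optnu}--\ref{prop:keyd} ``are easily adapted'' to \eqref{eq:scf}, which is exactly the substitution $\alpha \mapsto 1$, $r \mapsto r_f$ you carry out. Your write-up is in fact more detailed than the paper's own justification, and the bookkeeping step on the normalizing constant you flag is handled identically to \eqref{eq:CC}.
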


\quad Similar to the discussions in Section \ref{sc3},
the optimal control problem can be solved by neural ODEs/SDEs, or
be approximated by
\begin{equation}
\begin{aligned}
u_f^*(t,y) & = \overline{\sigma}^2(t) \nabla \left(\ln \mathbb{E}_{Q^y_{[t,T]}(\cdot)}\left[ \exp\left(r_f(Y_T)\right)\right]\right) \\
& \approx \overline{\sigma}^2(t) \nabla \left(\mathbb{E}_{Q^y_{[t,T]}(\cdot)}\left[ \ln \left( (f'_\pm)^{-1}\left( \frac{r(Y)}{\alpha}\right) \right)\right] \right).
\end{aligned}
\end{equation}
The optimal initial distribution $\nu_f^*(\cdot)$ is sampled by solving the control problem:
\begin{equation}
q_f^*: = \argmax_q \mathbb{E}_{P^q_{[0,T]}(\cdot)}\left[v_f^*(0, Y_0) - \frac{1}{2} \int_0^T  \left|\frac{q_t}{\sigma'(t)} \right|^2 dt\right],
\end{equation}
with $P^{q_f^*}_{T}(\cdot) = \nu_f^*(\cdot)$.

%-------------------------------------------------------------------------------------------------
\section{Numerical experiments}
\label{scnu}

\quad In this section, we apply our stochastic control approach for 
fine-tuning large-scale text-to-image models -- Stable Diffusion v1.5 (SD v1.5).
Specifically, we consider three different types of regularization
as in Section \ref{sc4}: KL divergence, Forward KL divergence, and $\gamma$-divergence. 
The exploration parameter $\alpha$ is evaluated at the levels $\{0.1,1,10\}$,
while the parameter $\gamma$ is tested at the values $\{0.25, 0.5, 1\}$.

\quad We follow the same experimental setup as described in \cite{UZ24}. 
The reward function for fine-tuning is the LAION Aesthetics Predictor V2 model \cite{schuhmann2022laion}, 
which is a linear MLP applied to CLIP embeddings, 
designed to approximate human aesthetic preferences.
All methods are evaluated with respect to three key metrics:
\begin{enumerate}[itemsep = 3 pt]
    \item 
    Reward: the average aesthetic score assigned to the generated images. 
    \item 
    Divergence: the entropy between the sample distribution of the fine-tuned model and that of the pretrained model, which quantifies how much the fine-tuned outputs deviate from those from the baseline model. 
    \item 
    Background ornamentation: the presence of excessive decorative or non-essential visual structures in the background. This phenomenon is a form of reward collapse, 
where the model exploits imperfections in the reward function to artificially increase scores without improving image quality.  
\end{enumerate}

\quad Table \ref{table1} reports the performance of different regularized fine-tuning methods under the three metrics, 
and Figures \ref{fig1}--\ref{fig3} illustrates the outputs of the fine-tuned models with different $\alpha$.
All experiments are conducted on four NVIDIA L40S GPUs.

\quad Our main observations are summarized as follows:
\begin{itemize}
    \item 
    {\em Performance}: 
Forward KL and $\gamma$-divergence consistently achieve higher mean reward values than KL divergence, with Forward KL at $\alpha = 10$ yielding the best overall performance.  
    \item 
    {\em Divergence}: 
Forward KL and $\gamma$-divergence maintain lower divergence from the pretrained output distribution across all settings, 
whereas KL divergence exhibits large drift under small $\alpha$.
    \item 
    {\em Image quality}: 
Forward KL and $\gamma$-divergence substantially reduce background ornamentation compared to KL divergence.
    \item 
    {\em Robustness}: The performance of $\gamma$-divergence is relatively insensitive to changes in $\alpha$, while larger $\alpha$ values improve performance for both Forward KL and $\gamma$-divergence. Moreover, larger $\alpha$ is effective in suppressing background ornamentation. 
\end{itemize}

\begin{table}[htbp!] 
	\centering
	\begin{tabular}{lcc|ccccc} \hline \label{tb:experiment.result}
		Method & $\alpha$ & \textbf{$\gamma$} & Reward & Entropy & Background ornamentation  \\
		\hline\hline
		KL & 10 & & $8.36\pm 0.24 $ & 0.10 & Heavy 
         \\%\hline\\[-1em]
		Forward &10 &  & $8.95\pm 0.17$ & 0.10 &    \\%\hline\\[-1em]
		Gamma &10 & 0.25 & $8.57\pm 0.27$ & 0.10 & \\%\hline\\[-1em]
		Gamma &10 & 0.5 & $8.73\pm 0.28$ & 0.10 &  \\ %\hline\\[-1em]
		Gamma &10 & 1 & $8.55\pm 0.39$ & 0.25 & Moderate  \\
        \hline\hline
        KL & 1 &  & $8.30\pm 0.22$ & 0.31 &  \\
		Forward & 1 & & $8.50\pm 0.23$  & 0.09 & Heavy  \\
		Gamma & 1 & 1 & $8.49\pm 0.20$ & 0.11 & Moderate  \\
        \hline\hline
        KL & 0.1 &  & $8.13\pm 0.31 $ & 0.57 & Moderate   \\
		Forward &0.1 &  & $8.45\pm 0.23$ & 0.10 &  \\
		Gamma &0.1 & 0.25 & $8.30\pm$ 0.28 & 0.27 & Heavy  \\
		Gamma &0.1 & 0.5 & $8.73\pm 0.23$ & 0.17 & Moderate  \\
		Gamma &0.1 & 1 & $8.65\pm 0.24$  & 0.10 & Moderate   \\\hline \hline
	\end{tabular}
    \caption{Evaluation of fine-tuning SD v1.5.}
    \label{table1}
\end{table}

\begin{figure*}[ht]
	\centering
	% adjustbox will scale to exactly \textwidth
	\begin{adjustbox}{width=.78\textwidth}
		% @{} kills the padding at each column boundary
		\begin{tabular}{@{}l*{8}{c@{}} }
			& cat & dog & horse & monkey  & bird & tiger & lizard \\
			\multirow{2}{*}{\stackanchor{KL}{$\alpha=10$}} 
& \includegraphics[width=0.13\linewidth]{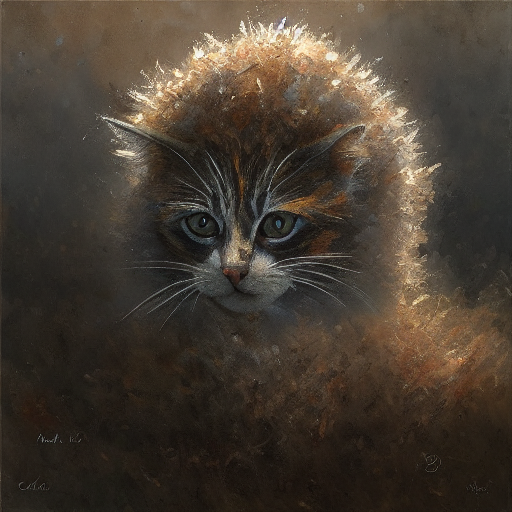}
& \includegraphics[width=0.13\linewidth]{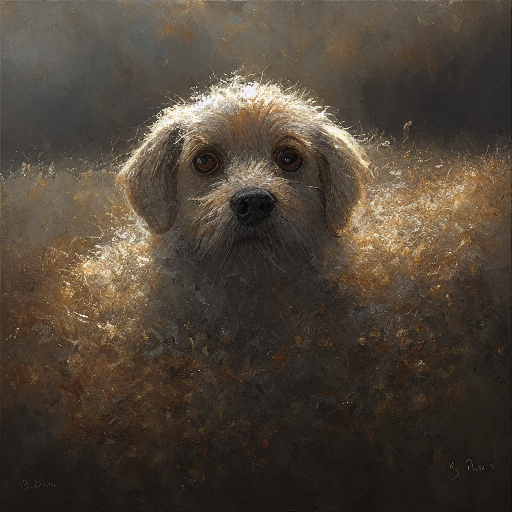}
& \includegraphics[width=0.13\linewidth]{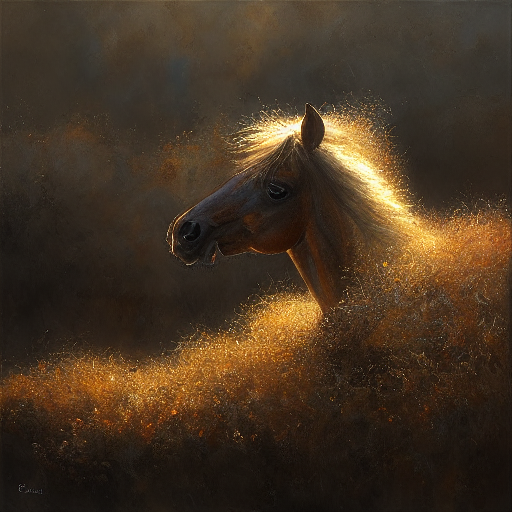}
& \includegraphics[width=0.13\linewidth]{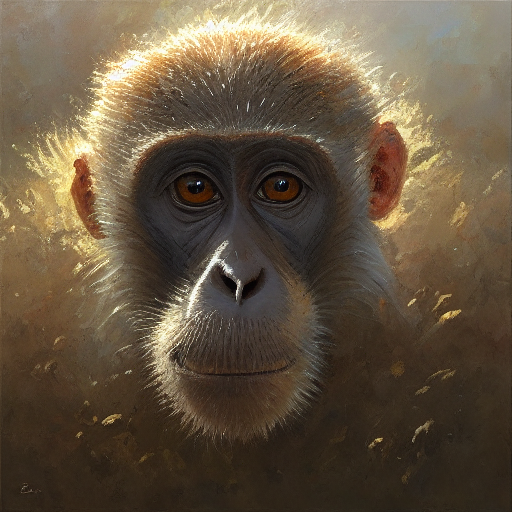}
& \includegraphics[width=0.13\linewidth]{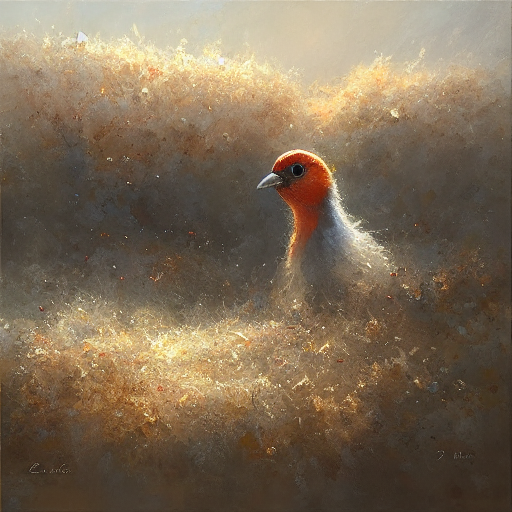}
& \includegraphics[width=0.13\linewidth]{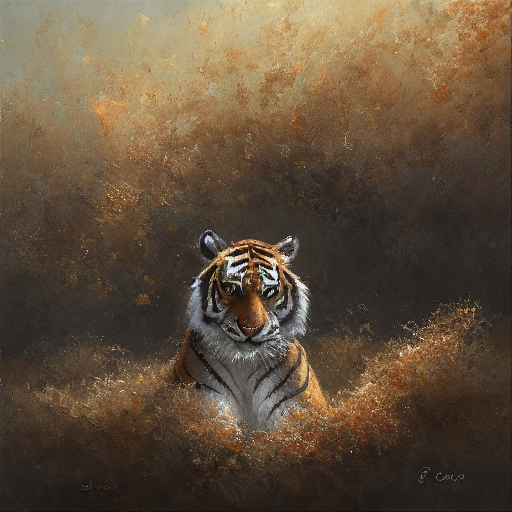} 
& \includegraphics[width=0.13\linewidth]{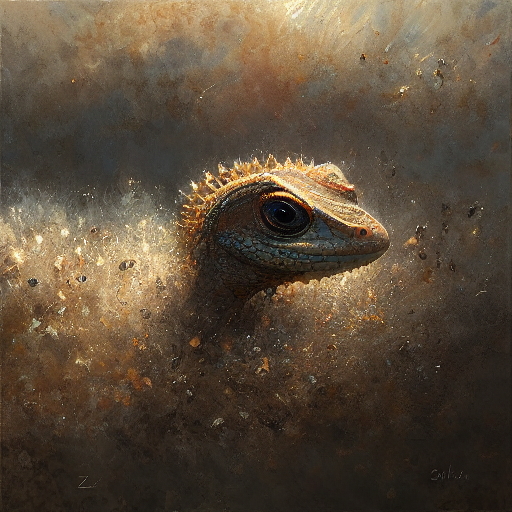} \\
& 8.23 & 8.47 & 7.98 & 8.38  & 8.47 & 8.13 & 8.87  \\
			\multirow{2}{*}{\stackanchor{Forward}{$\alpha=10$}} 
& \includegraphics[width=0.13\linewidth]{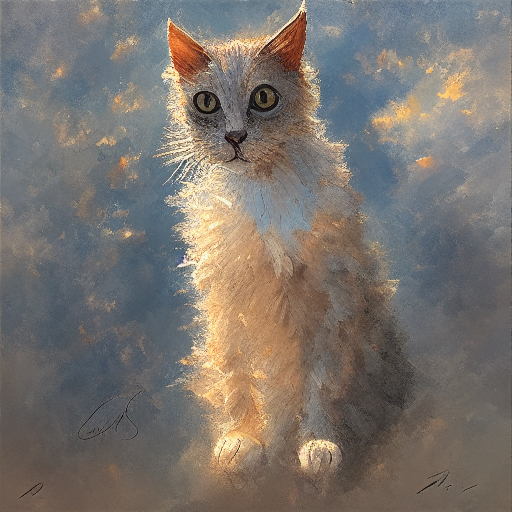}
& \includegraphics[width=0.13\linewidth]{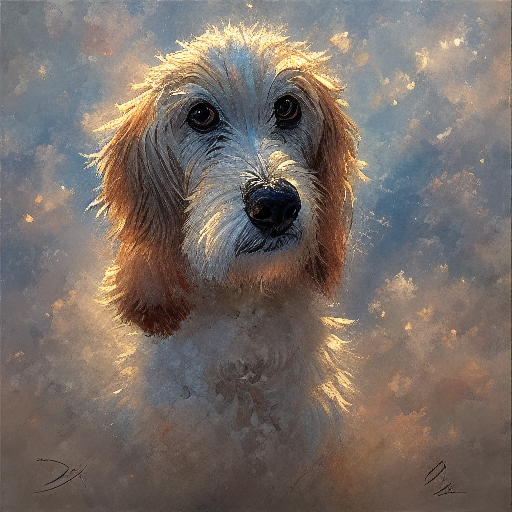}
& \includegraphics[width=0.13\linewidth]{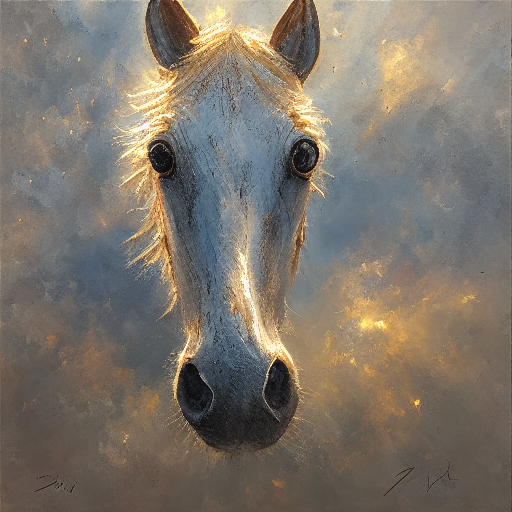}
& \includegraphics[width=0.13\linewidth]{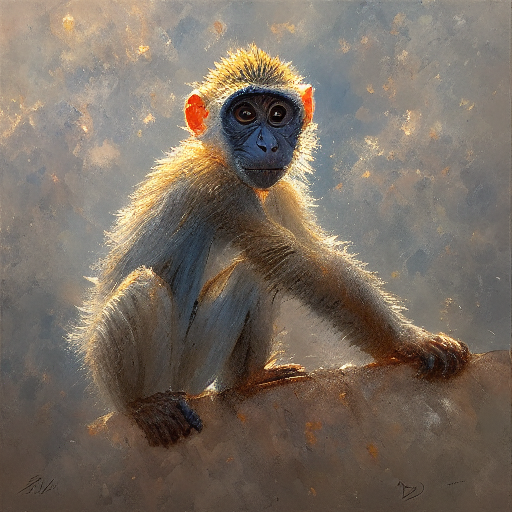}
& \includegraphics[width=0.13\linewidth]{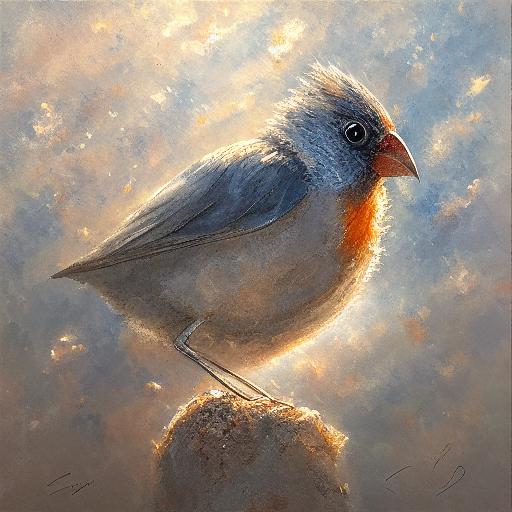}
& \includegraphics[width=0.13\linewidth]{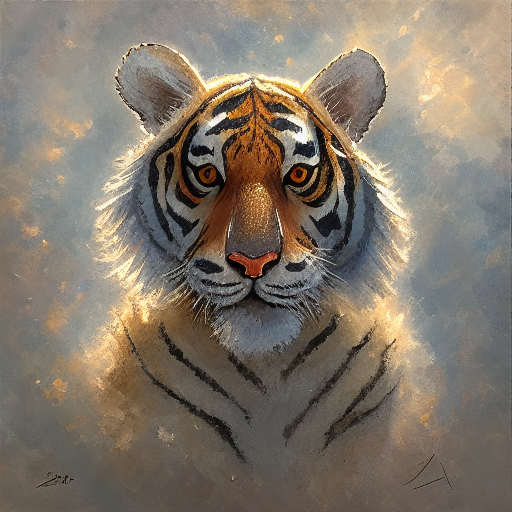} 
& \includegraphics[width=0.13\linewidth]{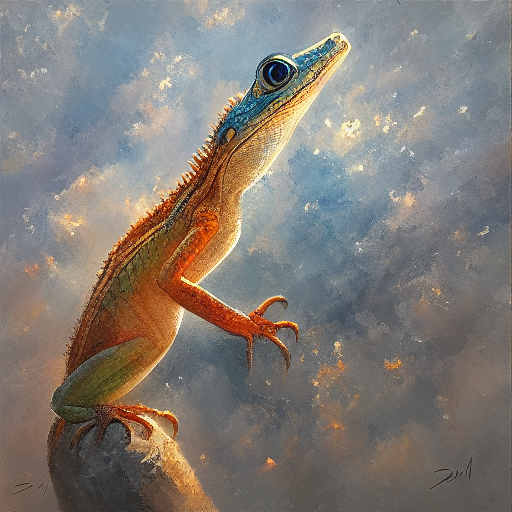} \\
& 8.73 & 9.00 & 8.77 & 9.01  &8.94 & 8.74 & 8.93 \\
\multirow{2}{*}{\stackanchor{Gamma}{\stackanchor{$\alpha=10$}{$\gamma=0.25$}}} 
& \includegraphics[width=0.13\linewidth]{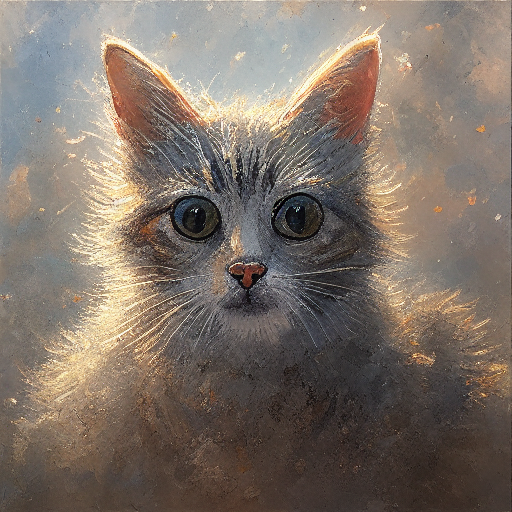}
& \includegraphics[width=0.13\linewidth]{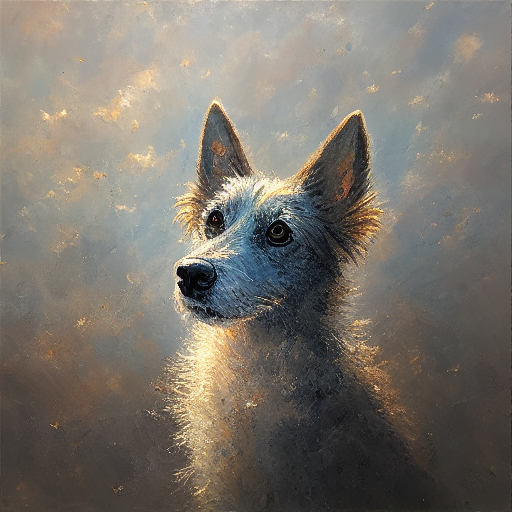}
& \includegraphics[width=0.13\linewidth]{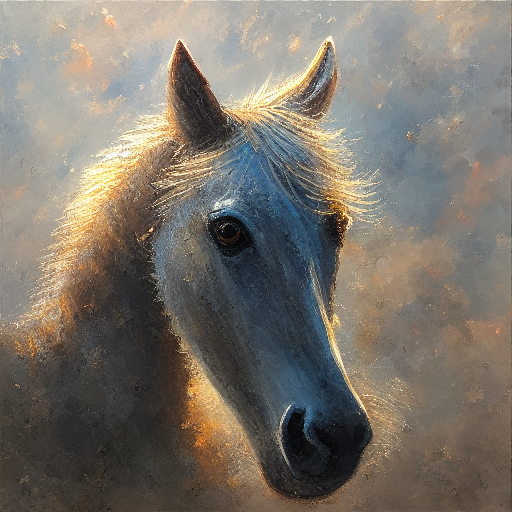}
& \includegraphics[width=0.13\linewidth]{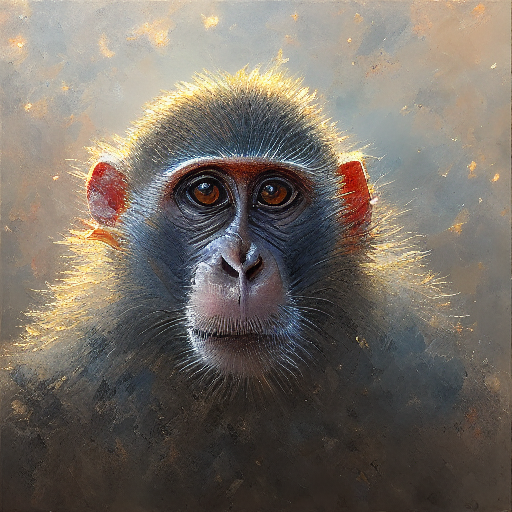}
& \includegraphics[width=0.13\linewidth]{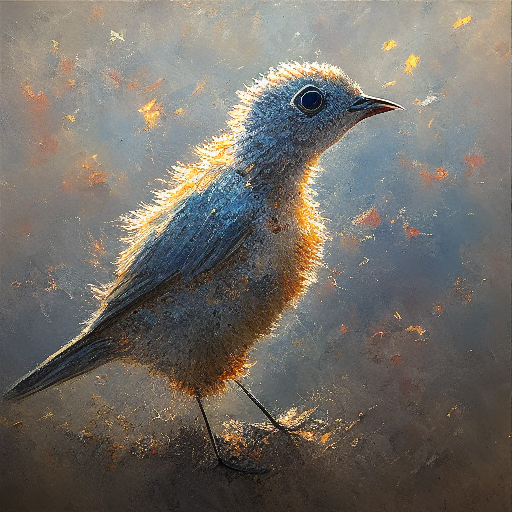}
& \includegraphics[width=0.13\linewidth]{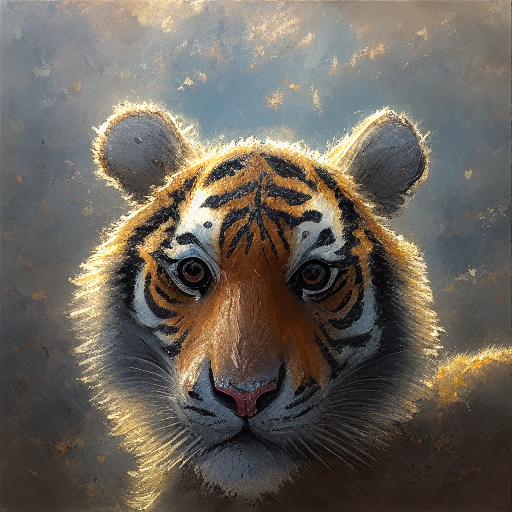} 
& \includegraphics[width=0.13\linewidth]{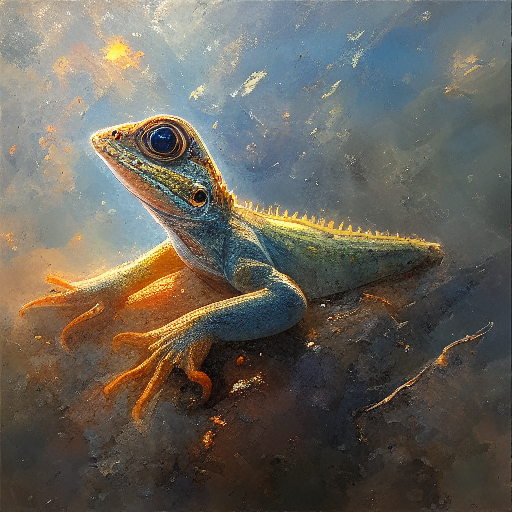} \\
& 8.49 & 8.84 & 8.35 & 8.47  & 8.62 & 8.76 & 8.53 \\
\multirow{2}{*}{\stackanchor{Gamma}{\stackanchor{$\alpha=10$}{$\gamma=0.5$}}} 
& \includegraphics[width=0.13\linewidth]{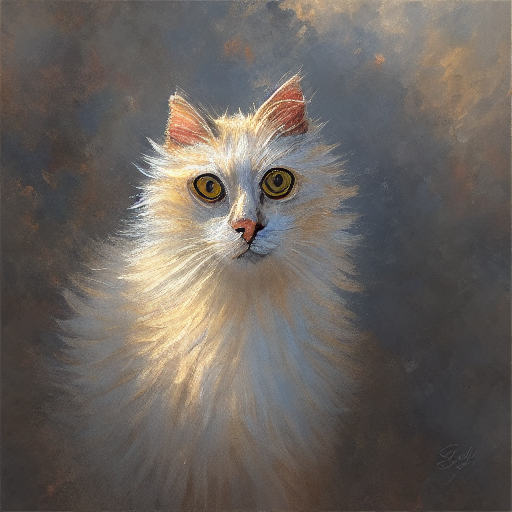}
& \includegraphics[width=0.13\linewidth]{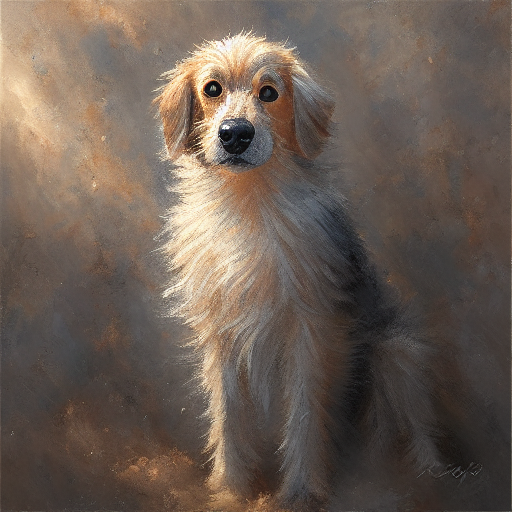}
& \includegraphics[width=0.13\linewidth]{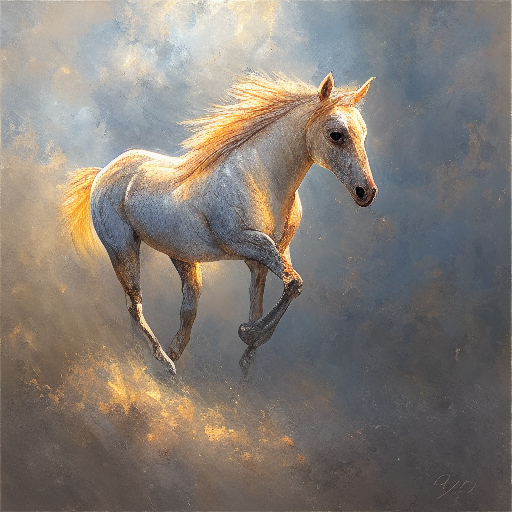}
& \includegraphics[width=0.13\linewidth]{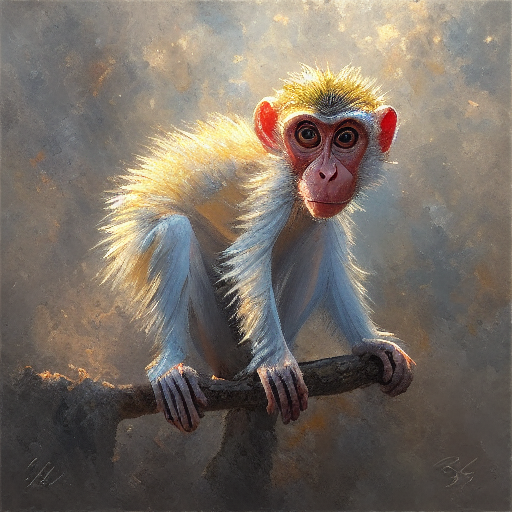}
& \includegraphics[width=0.13\linewidth]{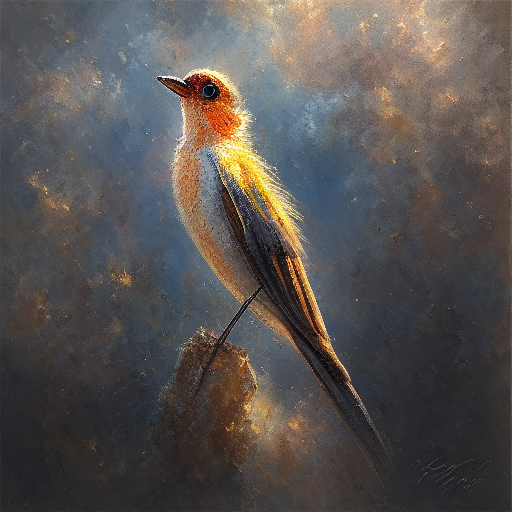}
& \includegraphics[width=0.13\linewidth]{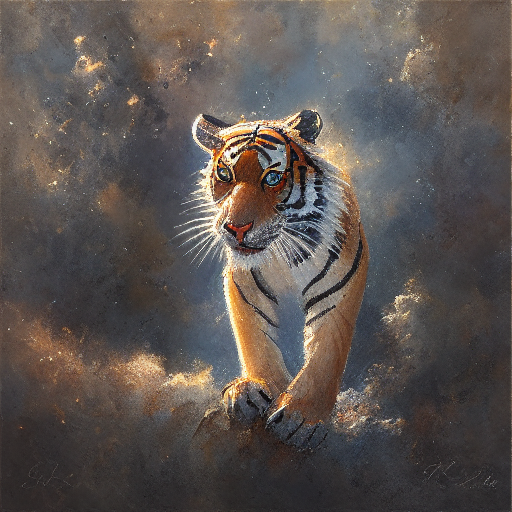} 
& \includegraphics[width=0.13\linewidth]{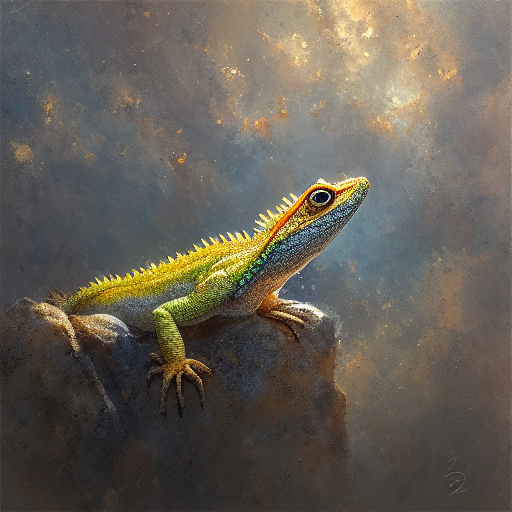} \\
& 8.72 & 8.50 & 8.00 & 8.97  & 8.64 & 8.67 & 8.74 \\
\multirow{2}{*}{\stackanchor{Gamma}{\stackanchor{$\alpha=10$}{$\gamma=1$}}} 
& \includegraphics[width=0.13\linewidth]{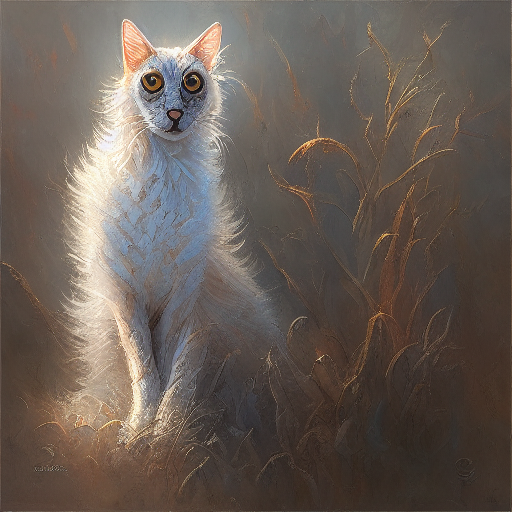}
& \includegraphics[width=0.13\linewidth]{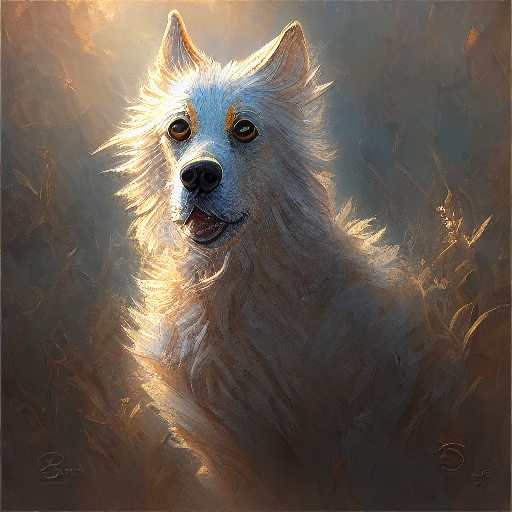}
& \includegraphics[width=0.13\linewidth]{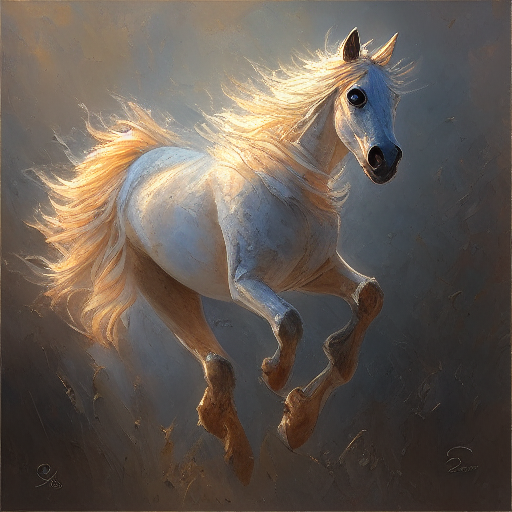}
& \includegraphics[width=0.13\linewidth]{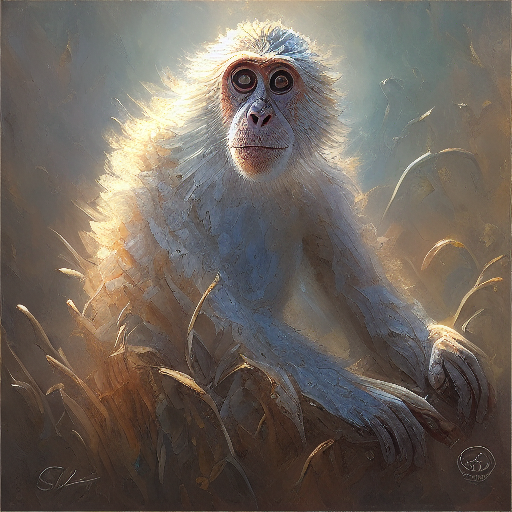}
& \includegraphics[width=0.13\linewidth]{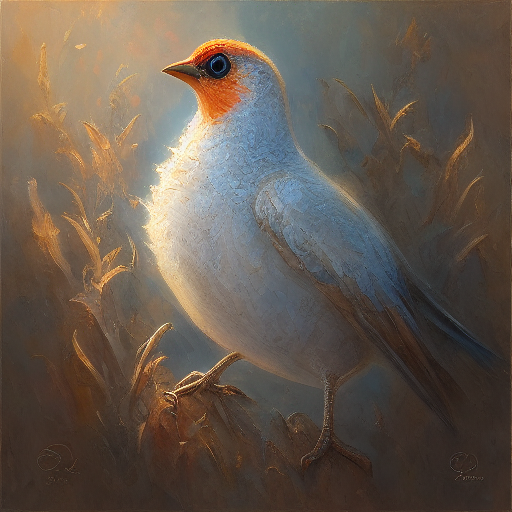}
& \includegraphics[width=0.13\linewidth]{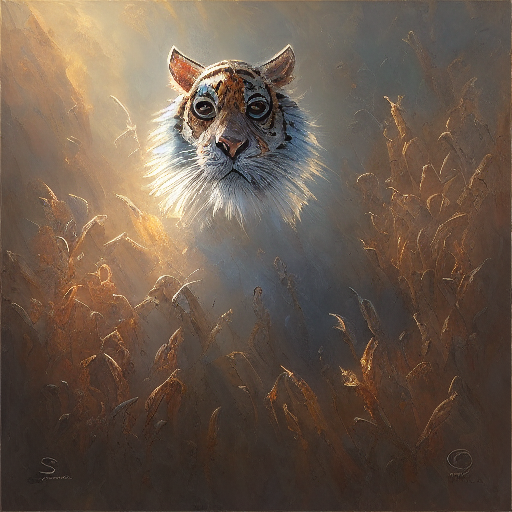} 
& \includegraphics[width=0.13\linewidth]{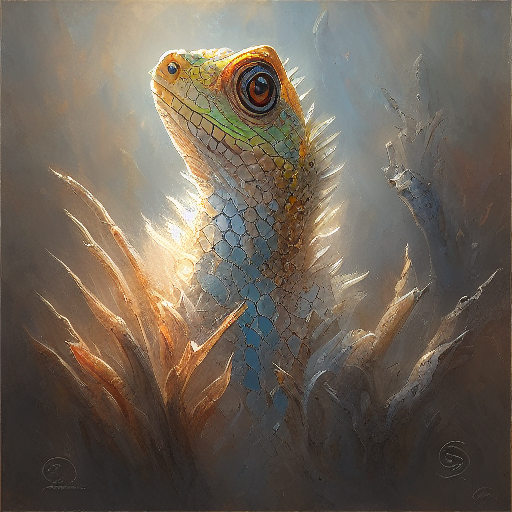} \\
& 8.47 & 8.75 & 8.47 & 8.71 & 8.54 & 8.59 & 8.53 & 8.69 \\
		\end{tabular}
	\end{adjustbox}
	\caption{Generated images of the fine-tuned models with $\alpha=10$.}
    \label{fig1}
\end{figure*}

\begin{figure*}[ht]
	\centering
	% adjustbox will scale to exactly \textwidth
	\begin{adjustbox}{width=.78\textwidth}
		% @{} kills the padding at each column boundary
		\begin{tabular}{@{}l*{8}{c@{}} }
			& cat & dog & horse & monkey &  bird & tiger & lizard \\
	\multirow{2}{*}{\stackanchor{KL}{$\alpha=1$}} 
	& \includegraphics[width=0.13\linewidth]{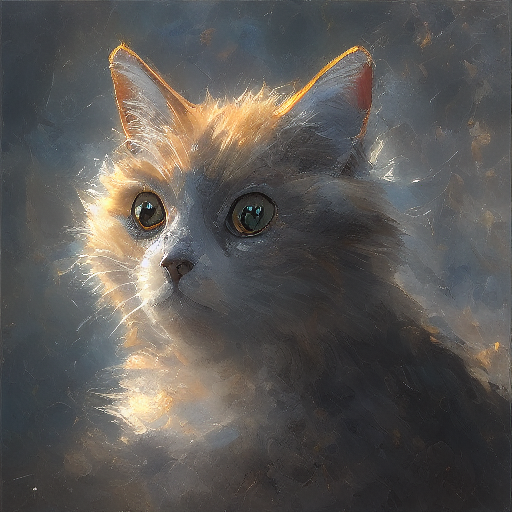}
	& \includegraphics[width=0.13\linewidth]{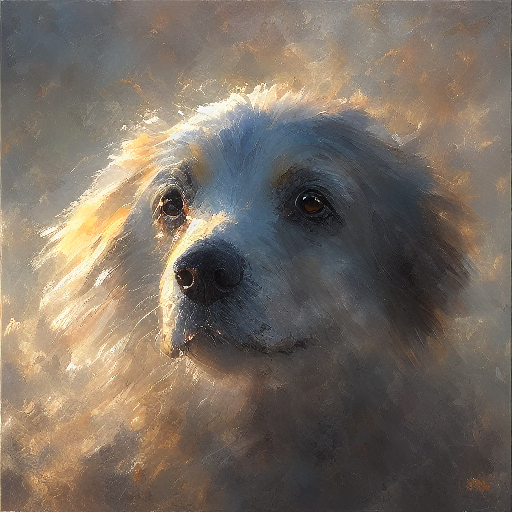}
	& \includegraphics[width=0.13\linewidth]{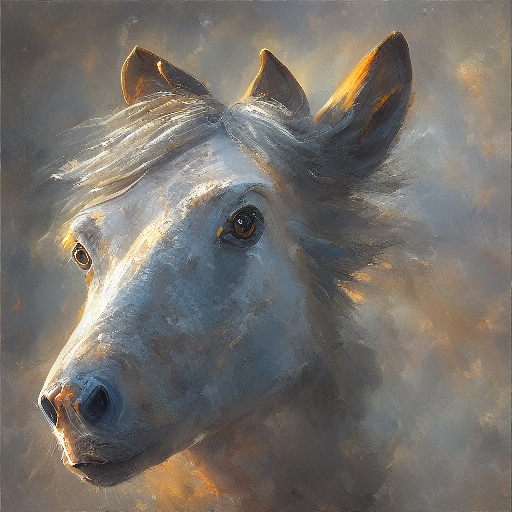}
	& \includegraphics[width=0.13\linewidth]{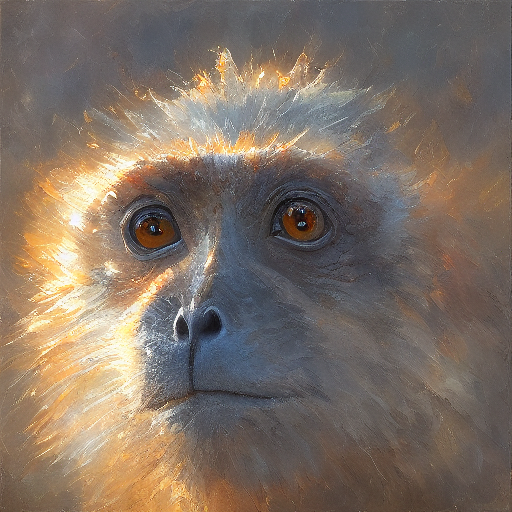}
	& \includegraphics[width=0.13\linewidth]{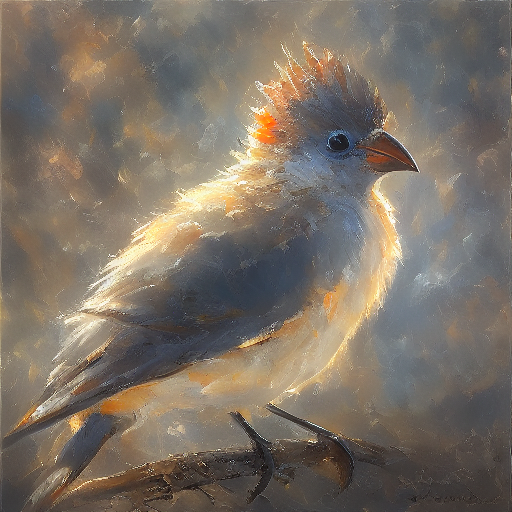}
	& \includegraphics[width=0.13\linewidth]{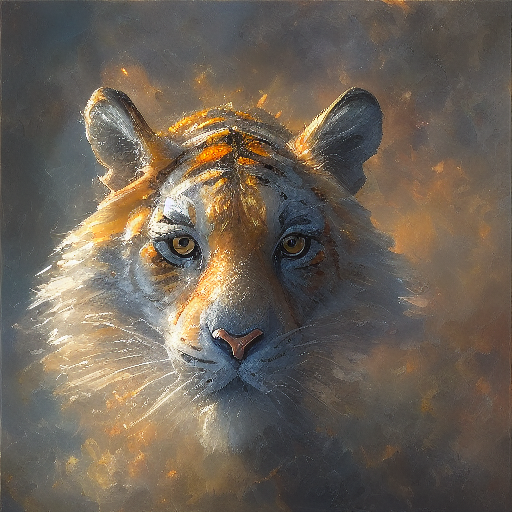} 
	& \includegraphics[width=0.13\linewidth]{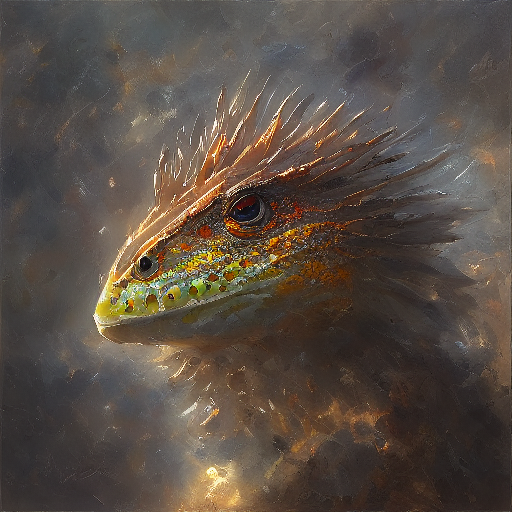} \\
	& 8.17 & 8.18 & 8.21 & 8.24  & 8.38 & 8.38 & 8.24  \\
	\multirow{2}{*}{\stackanchor{Forward}{$\alpha=1$}} 
	& \includegraphics[width=0.13\linewidth]{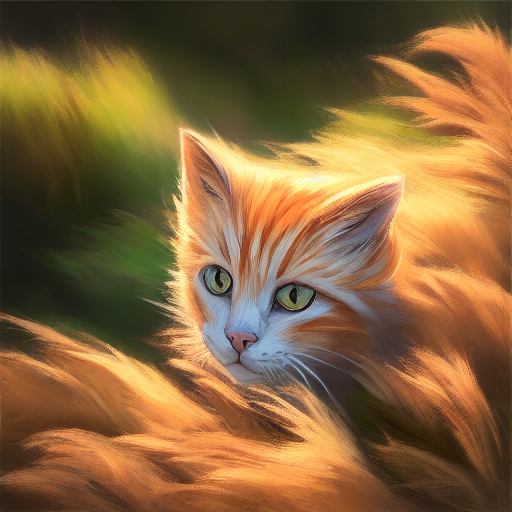}
	& \includegraphics[width=0.13\linewidth]{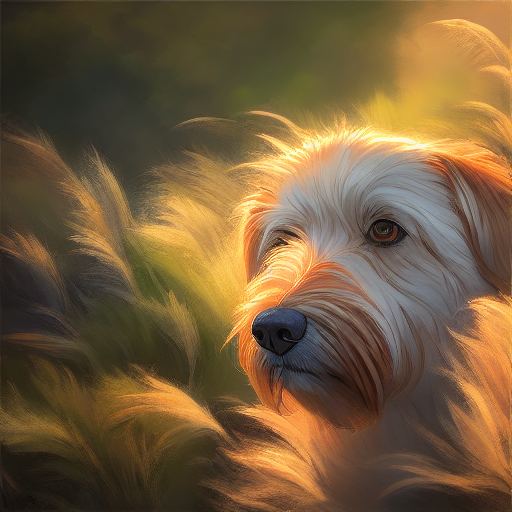}
	& \includegraphics[width=0.13\linewidth]{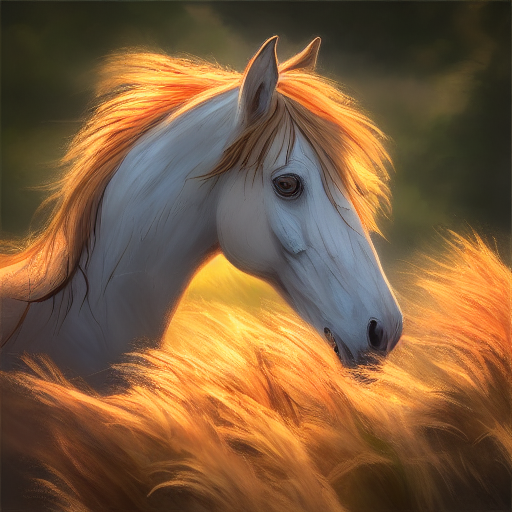}
	& \includegraphics[width=0.13\linewidth]{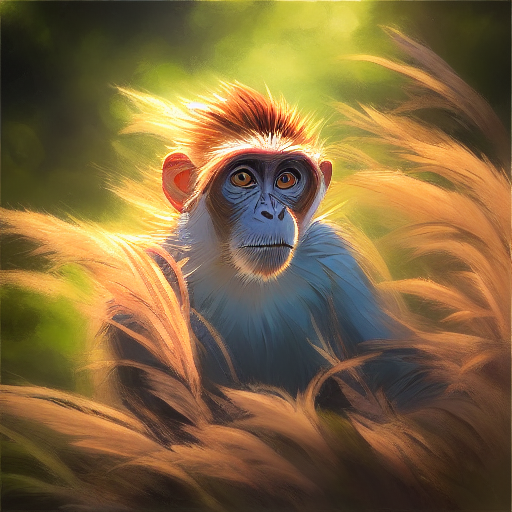}
	& \includegraphics[width=0.13\linewidth]{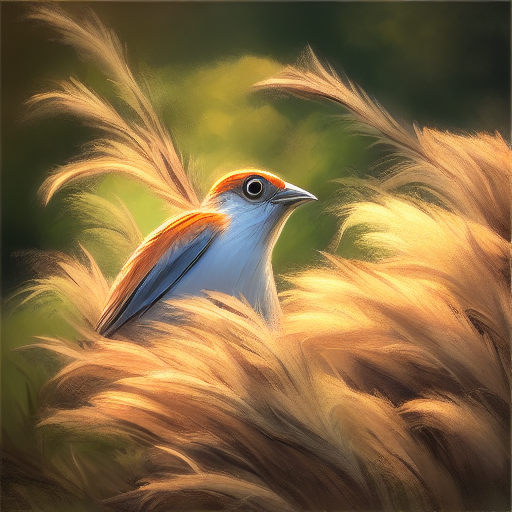}
	& \includegraphics[width=0.13\linewidth]{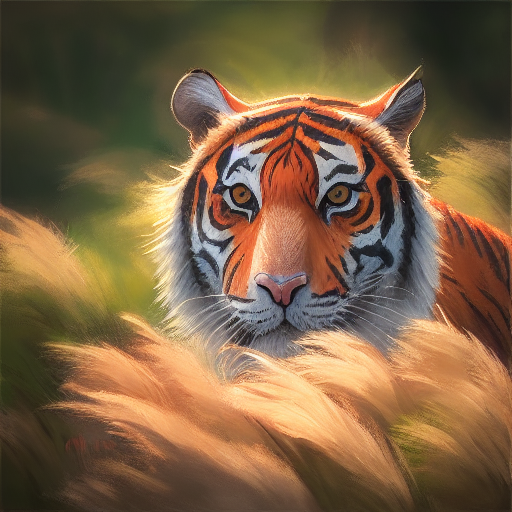} 
	& \includegraphics[width=0.13\linewidth]{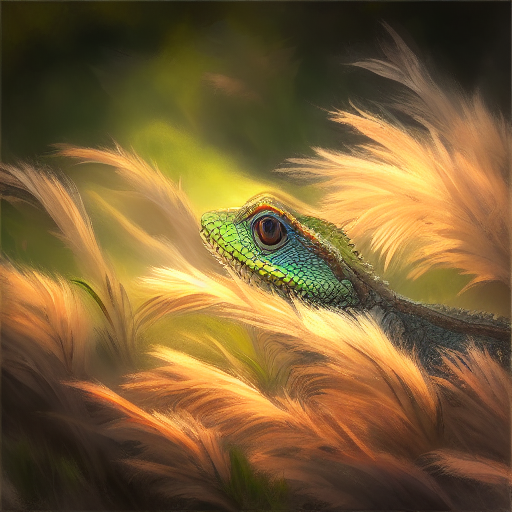} \\
	& 8.32 & 8.58 & 8.08 & 8.62  &8.64 & 8.73 & 8.70 \\
	\multirow{2}{*}{\stackanchor{Gamma}{\stackanchor{$\alpha=1$}{$\gamma=1$}}} 
	& \includegraphics[width=0.13\linewidth]{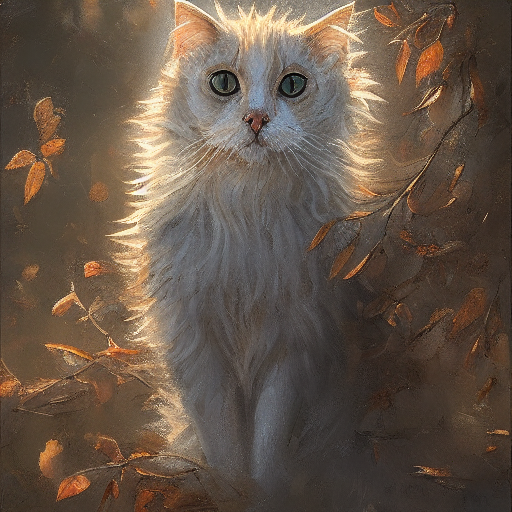}
	& \includegraphics[width=0.13\linewidth]{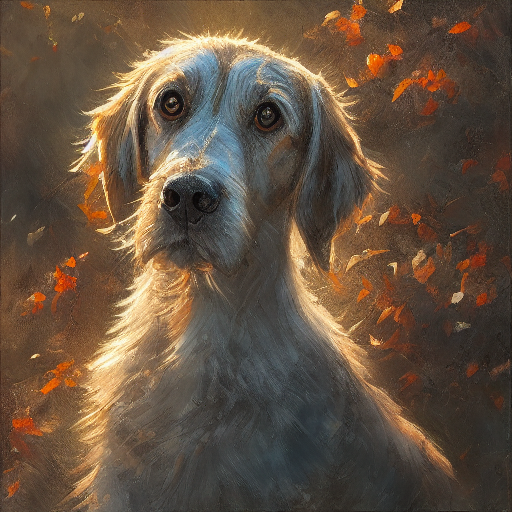}
	& \includegraphics[width=0.13\linewidth]{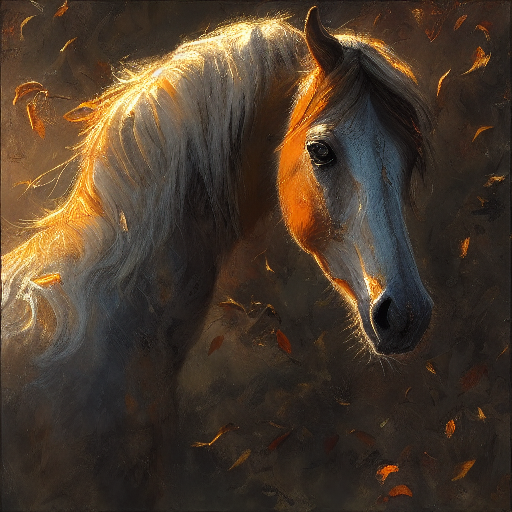}
	& \includegraphics[width=0.13\linewidth]{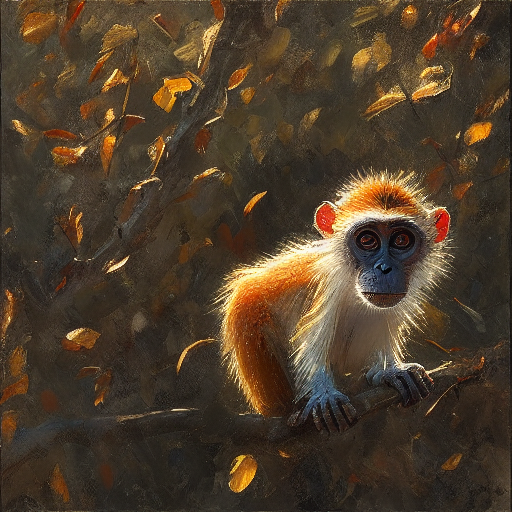}
	& \includegraphics[width=0.13\linewidth]{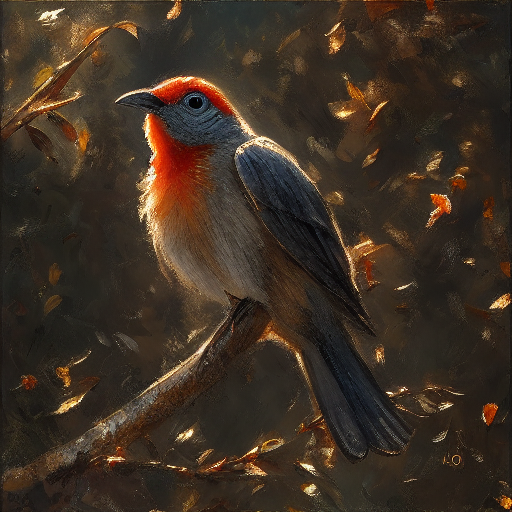}
	& \includegraphics[width=0.13\linewidth]{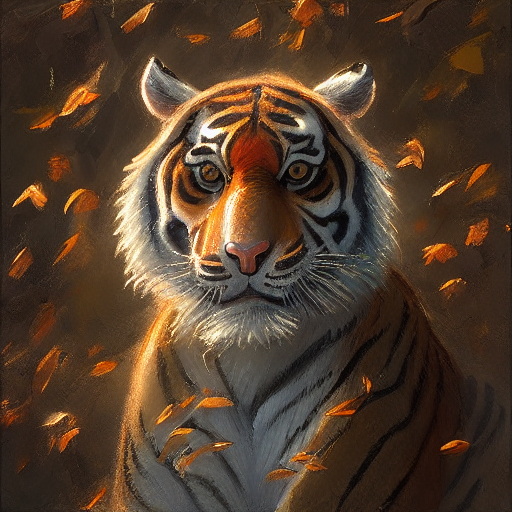} 
	& \includegraphics[width=0.13\linewidth]{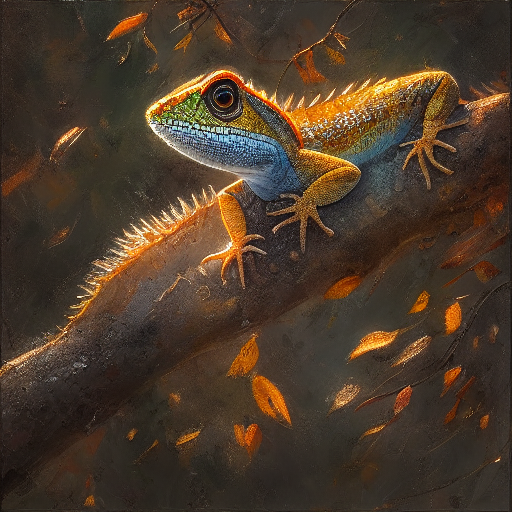} \\
	& 8.47 & 8.77 & 8.35 & 8.62 & 8.69 & 8.66 & 8.66 \\
		\end{tabular}
	\end{adjustbox}
	\caption{Generated images of the fine-tuned models with $\alpha=1$.}
        \label{fig2}
\end{figure*}

\begin{figure*}[ht]
	\centering
	% adjustbox will scale to exactly \textwidth
	\begin{adjustbox}{width=.78\textwidth}
		% @{} kills the padding at each column boundary
		\begin{tabular}{@{}l*{8}{c@{}} }
& cat & dog & horse & monkey &  bird & tiger & lizard \\
\multirow{2}{*}{\stackanchor{KL}{$\alpha=0.1$}} 
& \includegraphics[width=0.13\linewidth]{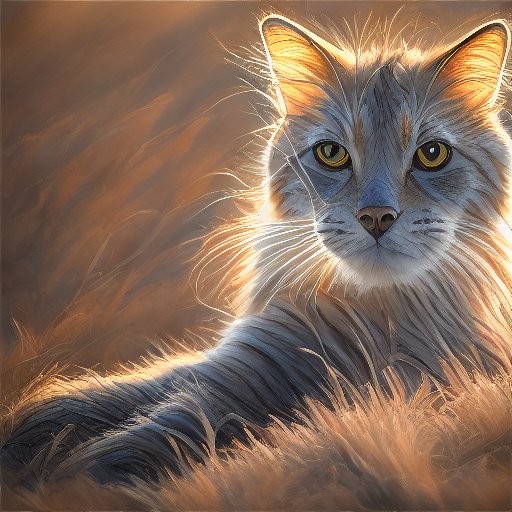}
& \includegraphics[width=0.13\linewidth]{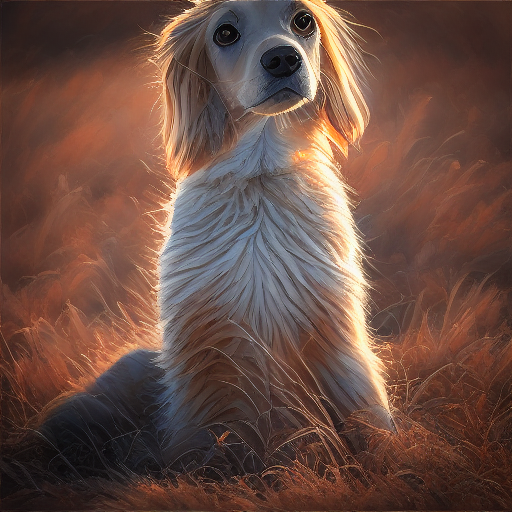}
& \includegraphics[width=0.13\linewidth]{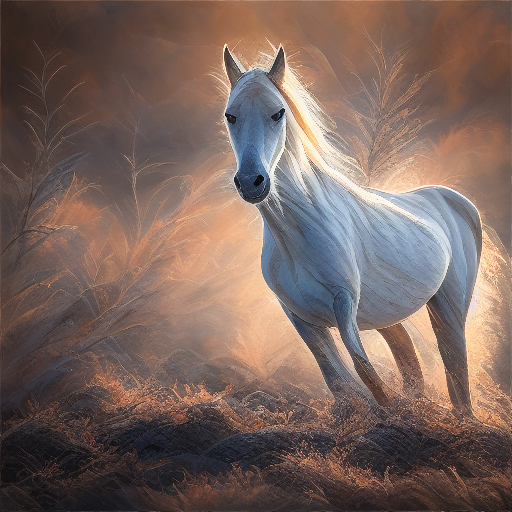}
& \includegraphics[width=0.13\linewidth]{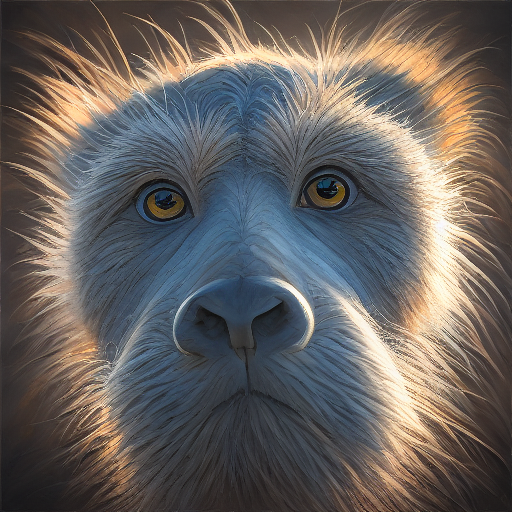}
& \includegraphics[width=0.13\linewidth]{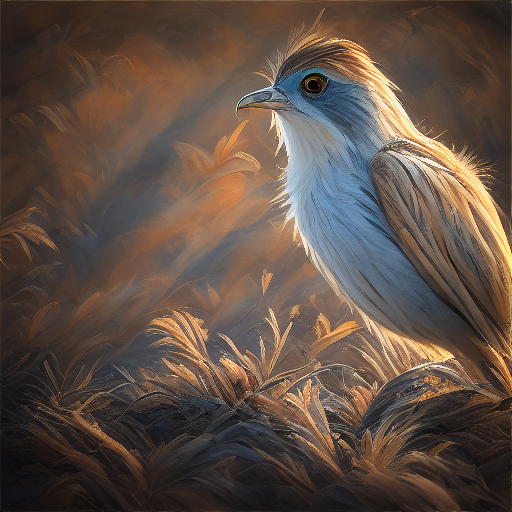}
& \includegraphics[width=0.13\linewidth]{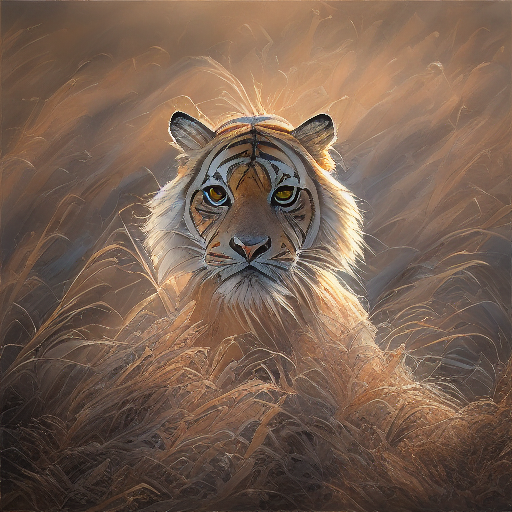} 
& \includegraphics[width=0.13\linewidth]{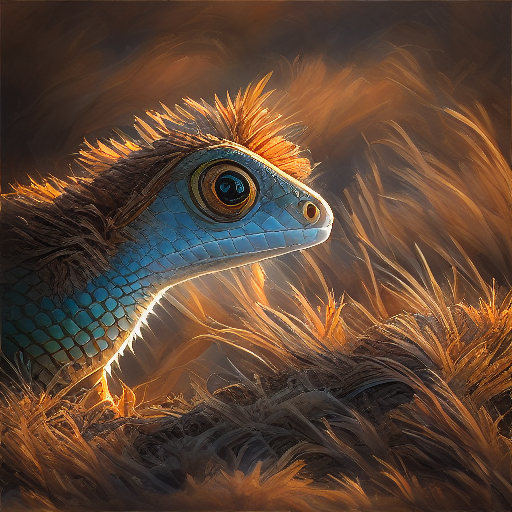} \\
& 8.01 & 8.18 & 7.81 & 8.14 & 8.26 & 8.24 & 8.46  \\
\multirow{2}{*}{\stackanchor{Forward}{$\alpha=0.1$}} 
& \includegraphics[width=0.13\linewidth]{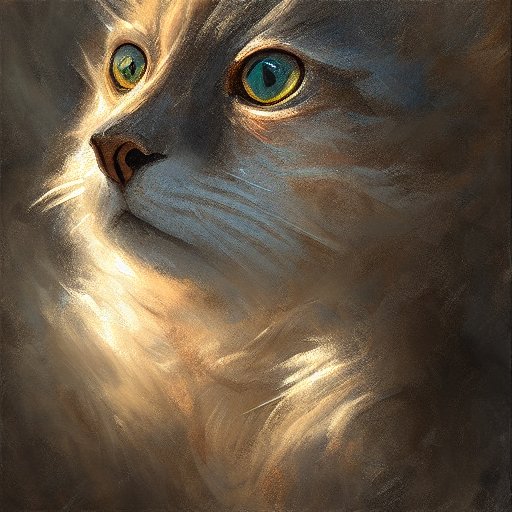}
& \includegraphics[width=0.13\linewidth]{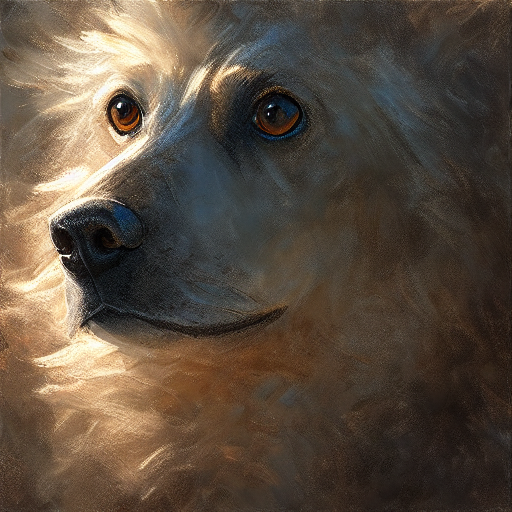}
& \includegraphics[width=0.13\linewidth]{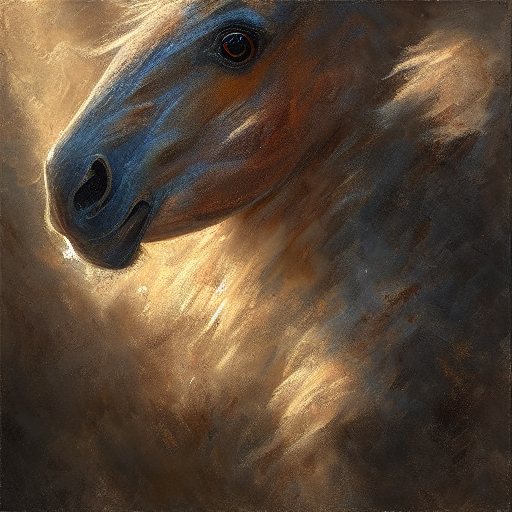}
& \includegraphics[width=0.13\linewidth]{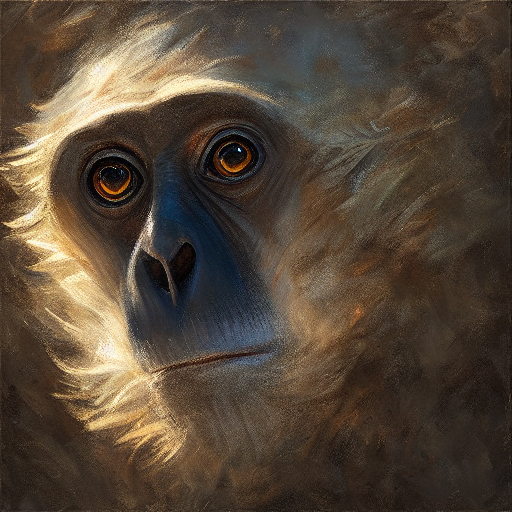}
& \includegraphics[width=0.13\linewidth]{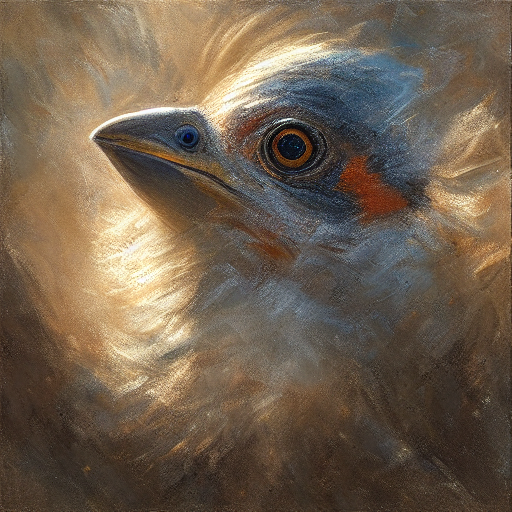}
& \includegraphics[width=0.13\linewidth]{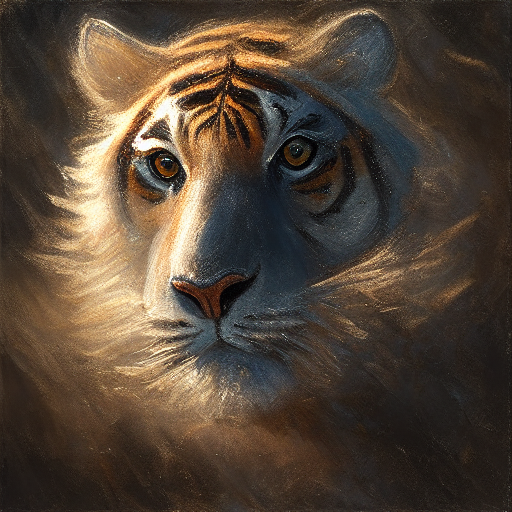} 
& \includegraphics[width=0.13\linewidth]{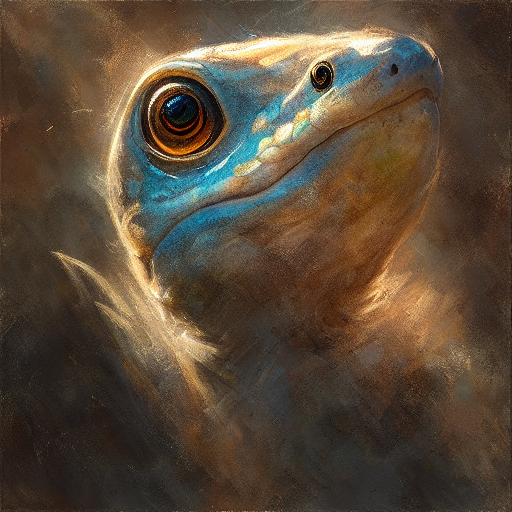} \\
& 8.43 & 8.58 & 8.09 & 8.86  & 8.31 & 8.43 & 8.52 \\
\multirow{2}{*}{\stackanchor{Gamma}{\stackanchor{$\alpha=0.1$}{$\gamma=0.25$}}} 
& \includegraphics[width=0.13\linewidth]{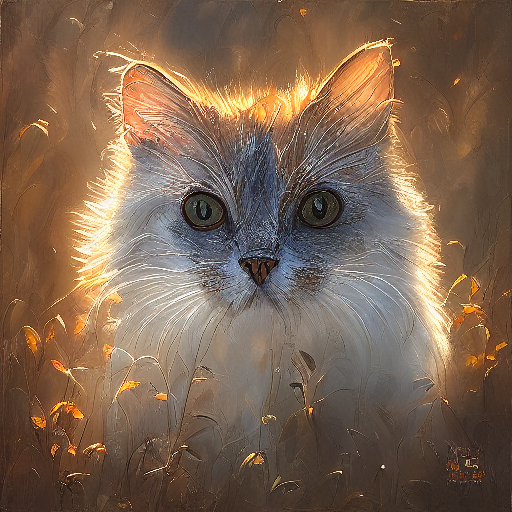}
& \includegraphics[width=0.13\linewidth]{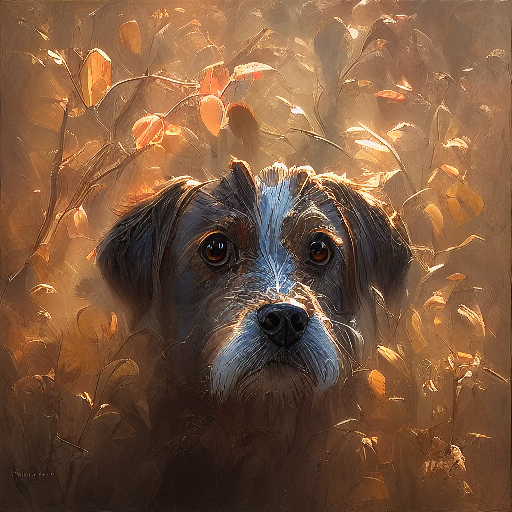}
& \includegraphics[width=0.13\linewidth]{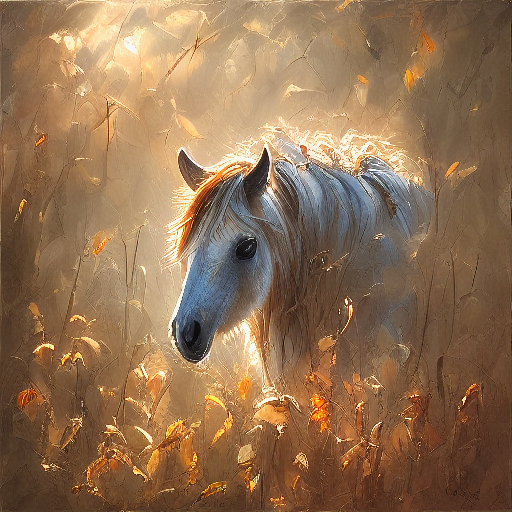}
& \includegraphics[width=0.13\linewidth]{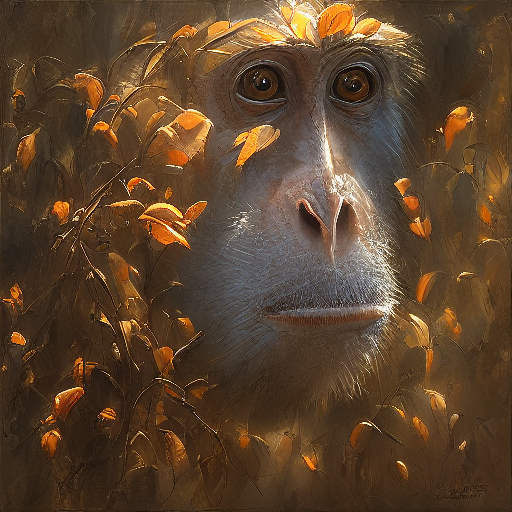}
& \includegraphics[width=0.13\linewidth]{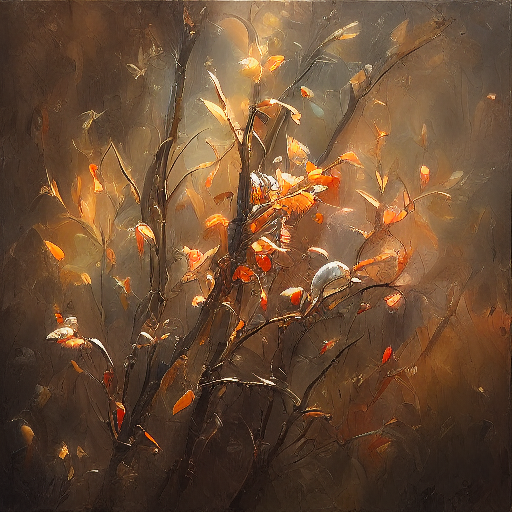}
& \includegraphics[width=0.13\linewidth]{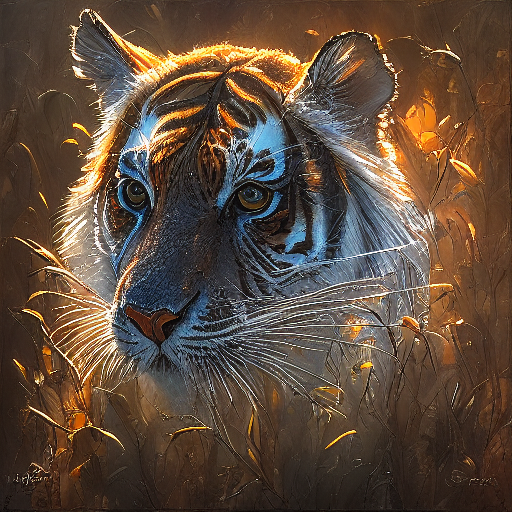} 
& \includegraphics[width=0.13\linewidth]{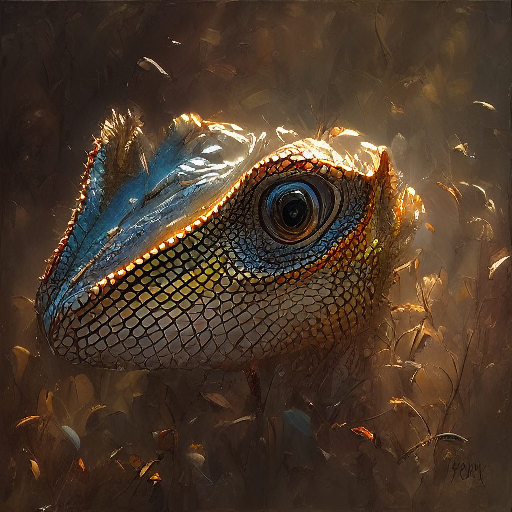} \\
&8.37& 8.37& 8.18& 8.70&  7.99& 8.28& 8.22  \\
\multirow{2}{*}{\stackanchor{Gamma}{\stackanchor{$\alpha=0.1$}{$\gamma=0.5$}}} 
& \includegraphics[width=0.13\linewidth]{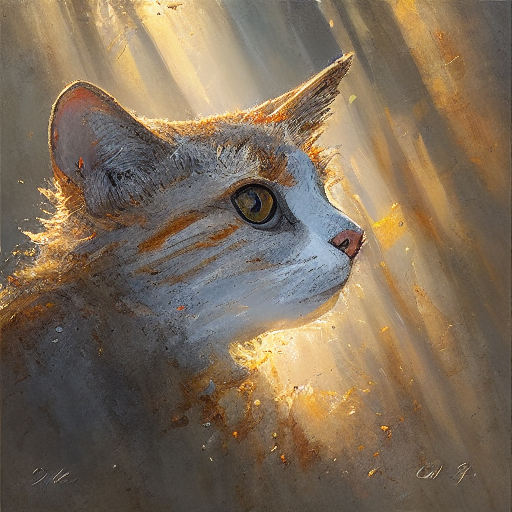}
& \includegraphics[width=0.13\linewidth]{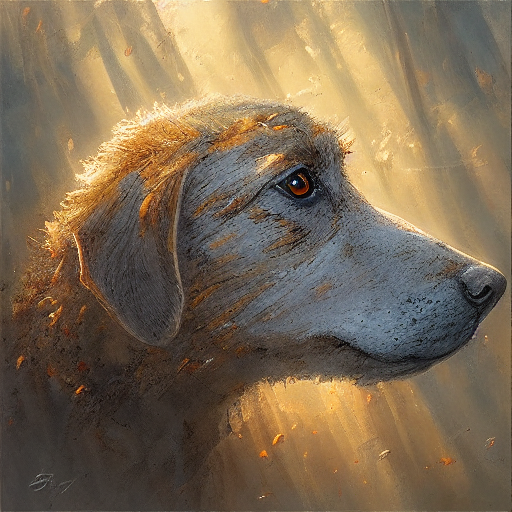}
& \includegraphics[width=0.13\linewidth]{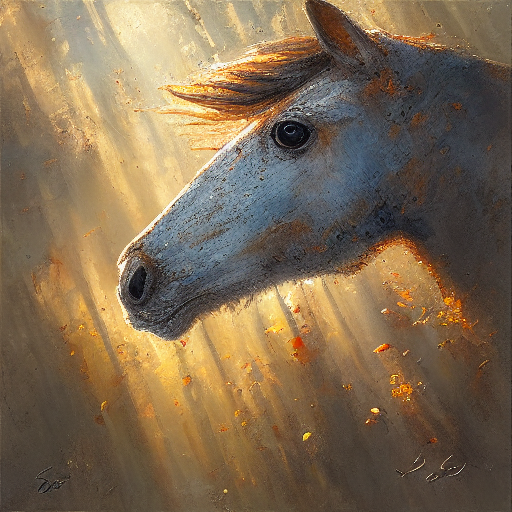}
& \includegraphics[width=0.13\linewidth]{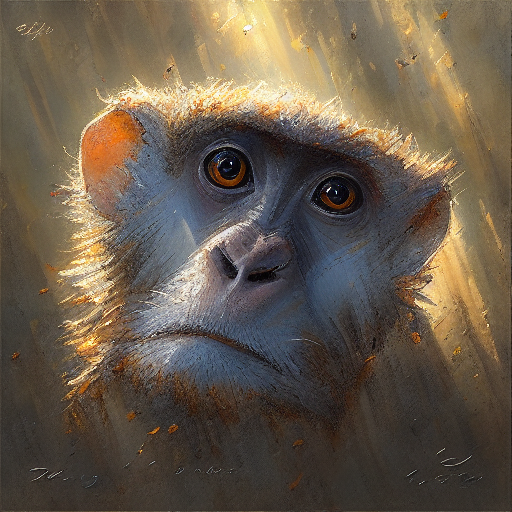}
& \includegraphics[width=0.13\linewidth]{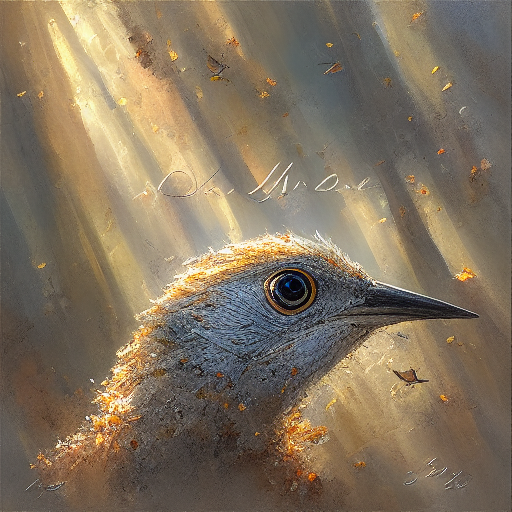}
& \includegraphics[width=0.13\linewidth]{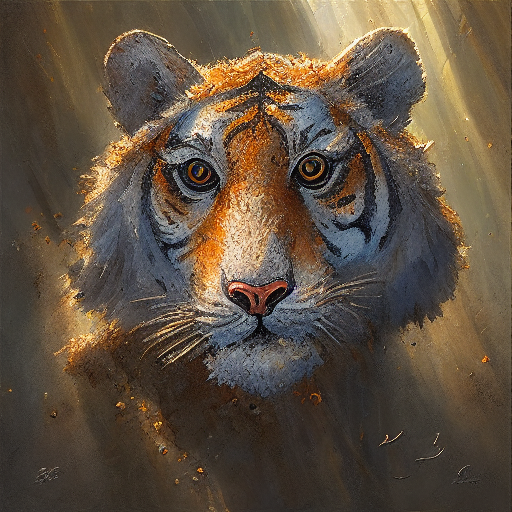} 
& \includegraphics[width=0.13\linewidth]{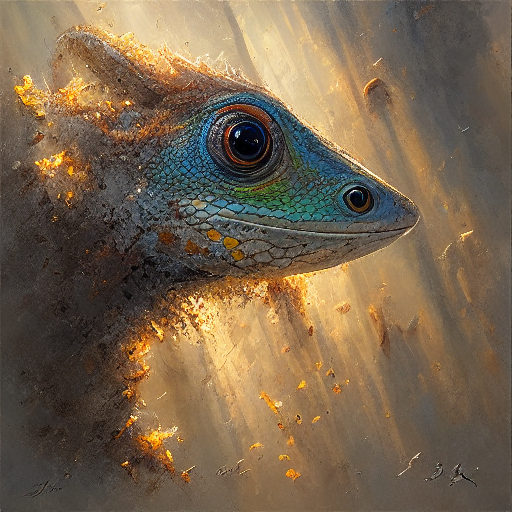} \\
& 8.72 & 8.64 & 8.64 & 8.74  & 8.67 & 8.77 & 8.90 \\
\multirow{2}{*}{\stackanchor{Gamma}{\stackanchor{$\alpha=0.1$}{$\gamma=1$}}} 
& \includegraphics[width=0.13\linewidth]{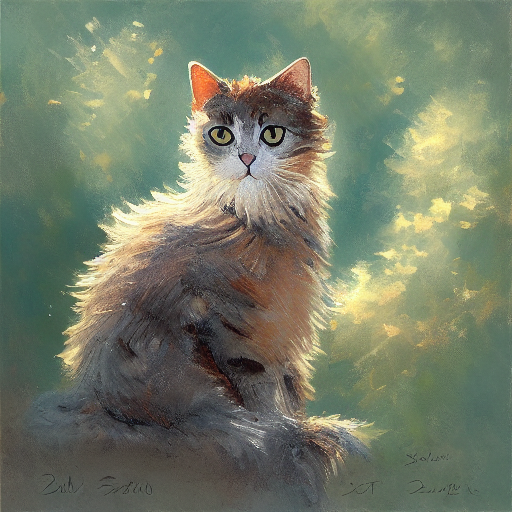}
& \includegraphics[width=0.13\linewidth]{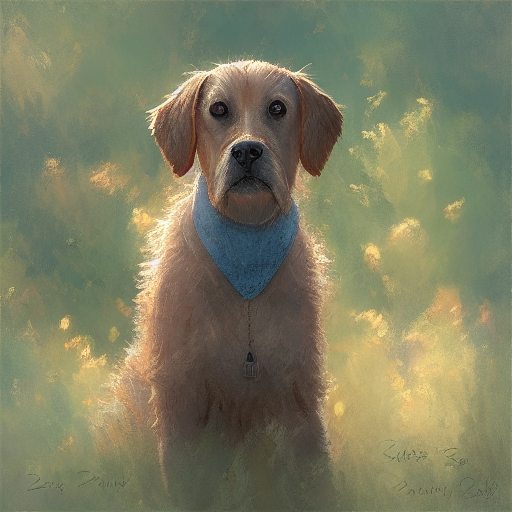}
& \includegraphics[width=0.13\linewidth]{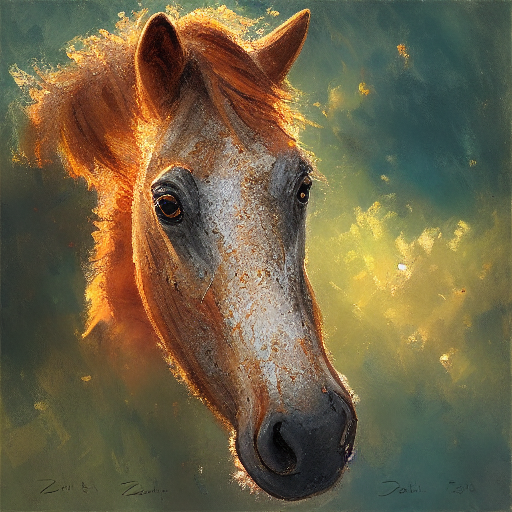}
& \includegraphics[width=0.13\linewidth]{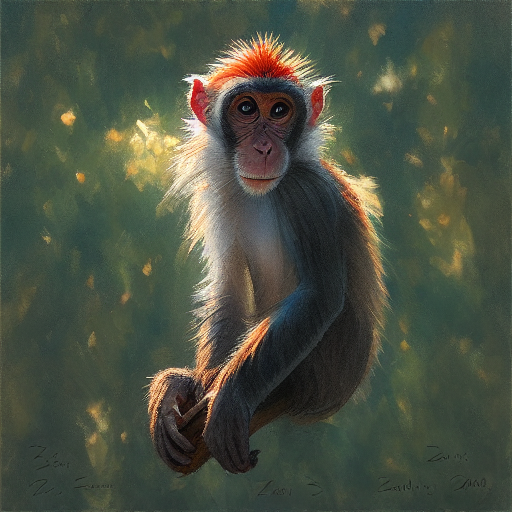}
& \includegraphics[width=0.13\linewidth]{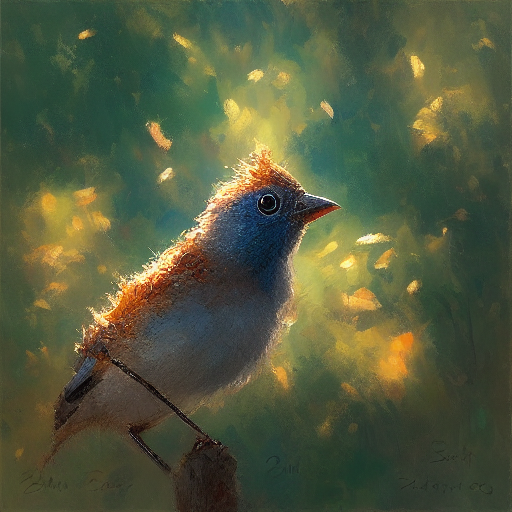}
& \includegraphics[width=0.13\linewidth]{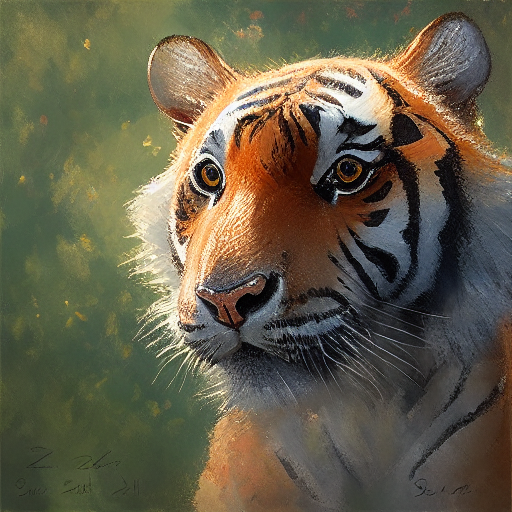} 
& \includegraphics[width=0.13\linewidth]{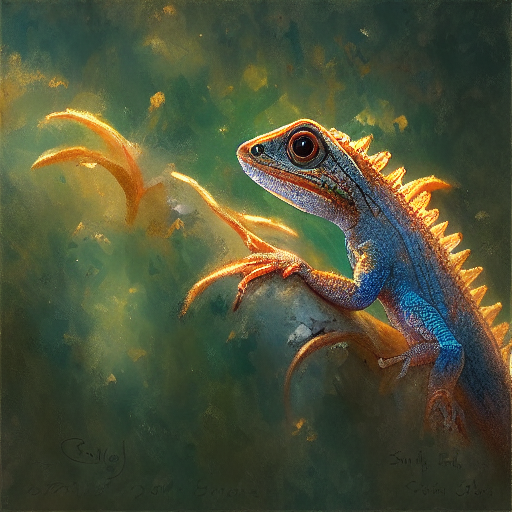} \\
& 8.48 & 8,67 & 8.58 & 8.70  & 8.65 & 8.69 & 8.52  \\
		\end{tabular}
	\end{adjustbox}
	\caption{Generated images of the fine-tuned models with $\alpha=0.1$.}
        \label{fig3}
\end{figure*}

\section{Conclusion}
\label{sc5}

\quad In this paper, 
we give a rigorous treatment to
the theory of fine-tuning by entropy-regularized stochastic control,
which was recently proposed by \cite{UZ24} in the context of 
continuous-time diffusion models. 
We also generalize to the setting where an $f$-divergence regularizer is introduced.
Numerical results are conducted on large scale text-to-image models, 
which suggest using Forward KL or $\gamma$-divergence for regularization,
with a large exploration parameter $\alpha$.

\quad There are several directions to extend this work. 
First, we provide an entropy-regularized approach to emulate the fine-tuned distribution \eqref{eq:fdivft} 
regularized by $f$-divergence.
It is interesting to know whether it can be sampled by directly 
regularizing the controlled process by $f$-divergence. 
Also it remains unknown whether the fine-tuned distribution \eqref{eq:entfFT}
can be generated by solving some stochastic control problem.
Finally, the fine-tuned distribution \eqref{eq:fdivft} and the stochastic control approach \eqref{eq:scf}
can be applied to other real data such as protein sequence generation.

\bigskip
{\bf Acknowledgement:} 
We thank Haoxian Chen, Minshuo Chen and Hanyang Zhao for various pointers to the literature.
We thank David Yao and Xun Yu Zhou for stimulating discussions.
W. Tang gratefully acknowledges financial support through NSF grant DMS-2206038,
the Columbia Innovation Hub grant, and the Tang Family Assistant Professorship.
F. Zhou acknowledges the support from a Columbia-CityU/HK collaborative project led by InnoHK Initiative, The
Government of the HKSAR and the AIFT Lab.

\bibliographystyle{abbrv}
\bibliography{unique}

\end{document}